\numberwithin{equation}{subsection}
\DeclareMathOperator{\Gal}{Gal}
\DeclareMathOperator{\supp}{supp}
\DeclareMathOperator{\id}{id}
\DeclareMathOperator{\Adm}{Adm}
\DeclareMathOperator{\EO}{EO}
\DeclareMathOperator{\cox}{cox}
\DeclareMathOperator{\pos}{pos}
\newcommand{\CSC}{(\textbf{CSC})\xspace}
\newcommand{\CC}{(\textbf{CC})\xspace}
\theoremstyle{plain}
\newtheorem{theorem}{Theorem}[subsection]
\newtheorem*{thrm1}{Theorem A}
\newtheorem*{thrm2}{Theorem B}
\newtheorem*{thrm3}{Theorem C}
\newtheorem{thm}[theorem]{Theorem}
\newtheorem{cor}[theorem]{Corollary}
\newtheorem{prop}[theorem]{Proposition}
\theoremstyle{definition}
\newtheorem{example}[theorem]{Example}
\def\ge{\geqslant}
\def\le{\leqslant}
\def\a{\alpha}
\def\G{\Gamma}
\def\e{\epsilon}
\def\o{\omega}
\def\s{\sigma}
\def\t{\tau}
\def\th{\theta}
\def\k{\kappa}
\def\l{\lambda}
\def\i{^{-1}}
\def\tSS{\tilde{\mathbb S}}
\def\SS{\mathbb S}
\def\JJ{\mathbb J}
\def\ZZ{\mathbb Z}
\def\NN{\mathbb N}
\def\QQ{\mathbb Q}
\def\ca{\mathcal A}
\def\ci{\mathcal I}
\def\cn{\mathcal N}
\def\co{\mathcal O}
\def\cp{\mathcal P}
\def\ct{\mathcal T}
\def\tW{\tilde W}
\newcommand{\kk}{\Bbbk}
\newcommand{\FF}{\mathbbm{F}}
\def\<{\langle}
\def\>{\rangle}
\begin{document}

\title[Basic loci of Coxeter type]{Basic loci in Shimura varieties of Coxeter type}

\author[U. G\"{o}rtz]{Ulrich G\"{o}rtz}
\address{Ulrich G\"{o}rtz\\Institut f\"ur Experimentelle Mathematik\\Universit\"at Duisburg-Essen\\Ellernstr.~29\\45326 Essen\\Germany}
\email{ulrich.goertz@uni-due.de}
\thanks{U. G\"{o}rtz was partially supported by the Sonderforschungsbereich TR 45 ``Periods, Moduli spaces and Arithmetic of Algebraic Varieties'' of the Deutsche Forschungsgemeinschaft.}

\author[X. He]{Xuhua He}
\address{Xuhua He, Department of Mathematics and Institute for advanced study, The Hong Kong University of Science and Technology, Clear Water Bay, Kowloon, Hong Kong}
\address{Xuhua He, Department of Mathematics, University of Maryland, College Park, MD 20742, USA}
\thanks{X. He was partially supported by HKRGC grant 602011.}
\email{xuhuahe@gmail.com}

\begin{abstract}
This paper is a contribution to the general problem of giving an explicit description of the basic locus in the reduction modulo $p$ of Shimura varieties. Motivated by \cite{Vollaard-Wedhorn} and \cite{Rapoport-Terstiege-Wilson}, we classify the cases where the basic locus is (in a natural way) the union of classical Deligne-Lusztig sets associated to Coxeter elements.  We show that if this is satisfied, then the Newton strata and Ekedahl-Oort strata have many nice properties.  
\end{abstract}

\maketitle

\section*{Introduction}

Understanding arithmetic properties of Shimura varieties has been a cornerstone in many developments in arithmetic geometry and number theory in the last decades. To a large extent, these arithmetic properties are encoded in the geometric properties of the special fiber of a suitable integral model, and studying these reductions of Shimura varieties has been fruitful in many cases.

A Shimura variety of PEL type can be described as a moduli space of abelian varieties with additional structure (polarization, endomorphisms, level structure). To each such abelian variety we can attach its $p$-divisible group; it inherits corresponding additional structure. The special fiber at $p$ naturally decomposes into finitely many ``Newton strata'', which are given by the isogeny class of these $p$-divisible groups (with additional structure).

There is a unique closed Newton stratum. This is the so-called \emph{basic locus}. For two reasons the basic locus plays a particular role in the study of the geometry of the special fiber. First, it is the only Newton stratum where there is reasonable hope for a complete, explicit description as a variety. Second, it turns out that a good understanding of basic loci can often be used to prove results about general Newton strata, and hence about the whole special fiber, by an induction process.

Explicit descriptions of the basic locus have been of great importance in the work of Kudla, Rapoport, Howard, Terstiege and others on the intersection numbers of special cycles in the special fibers of Shimura varieties (and their relationship to Fourier coefficients of modular forms as predicted by the ``Kudla program''). Kaiser \cite{Kaiser} used the description of the basic locus in the module space of principally polarized abelian surfaces in his proof of the twisted fundamental lemma for $GSp_4$. A good description of the basic locus is also useful to prove instances of the ``arithmetic fundamental lemma'', cf.~the paper by Rapoport, Terstiege and Zhang \cite{rapoport-terstiege-zhang}.

While the basic locus is the simplest Newton stratum, it still cannot be described explicitly in general. For example, the basic locus in the moduli space of principally polarized abelian varieties of dimension $g$ is just the supersingular locus, i.e., the closed subvariety of supersingular abelian varieties. 

On the other hand, besides several small rank cases (cf.~Section~\ref{sec:5.3}), there are a number of families of Shimura varieties of PEL-type where the basic locus allows for a simple and very explicit description. The case studied first is probably the ``Drinfeld case'' where the underlying algebraic group is a division algebra and the basic locus can be described in terms of Deligne's formal model of Drinfeld's half space. Other typical cases are those attached to unitary groups $GU(1, n-1)$. See the papers by Vollaard and Wedhorn \cite{Vollaard-Wedhorn}, and by Rapoport, Terstiege, and Wilson \cite{Rapoport-Terstiege-Wilson}. Roughly speaking, in all those cases the following picture emerges: The basic locus is a union of Ekedahl-Oort strata and admits a stratification by (variants of) classical Deligne-Lusztig varieties. The index set of the stratification and the closure relations between strata can be described in terms of the Bruhat-Tits building of a certain inner form of the underlying group.

The uniformization theorem by Rapoport and Zink \cite{RZ} allows to describe the basic locus in terms of a moduli space of $p$-divisible groups with additional structure, a so-called Rapoport-Zink space. Roughly speaking, the set of points of this Rapoport-Zink space can be described, using Dieudonn\'e theory, as a space of lattices inside a fixed $L$-vector space. Here $L$ is the completion of the maximal unramified extension of $\mathbb Q_p$ (or more generally of a finite extension $F/\mathbb Q_p$). The lattices have to satisfy conditions which ensure that they arise as the Dieudonn\'e module of a $p$-divisible group with additional structure as specified by the moduli problem. In other terminology, these lattices form an \emph{affine Deligne-Lusztig variety} inside the space of all lattices.

In this paper, we are mainly interested in the ``Coxeter case'', i.e., the cases where the basic locus is a union of Ekedahl-Oort strata and each Ekedahl-Oort stratum is (in a natural way) the union of classical Deligne-Lusztig sets associated to Coxeter elements. 

Let $F$ be a finite extension of $\mathbb Q_p$ (the mixed characteristic case) or let $F=\mathbb F_q((\epsilon))$ (the function field case). Fix a datum $(G, \mu)$ of a connected quasi-simple semisimple algebraic group $G$ over $F$ which splits over a tamely ramified extension and a minuscule cocharacter $\mu$ (see Section~\ref{sec:notation} and Section~\ref{setup}). Let $P$ be a standard rational maximal parahoric subgroup of $G(L)$, where $L$ is the completion of the maximal unramified extension of $F$. Let $\s$ denote the Frobenius of the extension $L/F$. 

For each $b\in G(L)$ we consider the following union of affine Deligne-Lusztig varieties:
\[
X(\mu, b)_P = \{ g\in G(L)/P;\ g\i b\s(g)\in \bigcup_{w\in\Adm(\mu)} PwP \}
\]
(see~\ref{sec:7.1}, compare also \cite{rapoport:guide} Section 5). Denote by $B(G,\mu)$ the set of $\s$-conjugacy classes for which this set is non-empty. There is a unique basic $\s$-conjugacy class in $B(G, \mu)$. If $b$ lies inside this basic $\s$-conjugacy class, then we call $X(b,\mu)_P$ the \emph{basic locus} attached to the above data. (For a different choice of $b$ we get, up to isomorphism, the same result. In fact, in a large part of the paper we work with all basic $b$ simultaneously, i.e.~with ``fiber bundles'' over the basic $\s$-conjugacy class, and only later to restrict to single fibers of this bundle.)

In case the data $(G,\mu,b)$ corresponds to a Rapoport-Zink space $\mathscr M$ (in particular, $\mathop{\rm char} F=0$), there typically is a bijection $\mathscr M(\overline{\mathbb F}_p) \cong X(\mu, b)_P$, given by Dieudonn\'e theory; cf.~\cite{rapoport:guide}.

In Section~\ref{setup} we define a notion of \emph{Ekedahl-Oort elements} which gives rise to a subset $\EO^J(\mu)\subset \tW$ of the extended affine Weyl group $\tW$ (here $J$ denotes the type of the parahoric subgroup $P$). For each $b$ and each $w\in \EO^J(\mu)$, we obtain the EO stratum $X_{J,w}(b)$, cf.~\eqref{EOstrat}, attached to $w$ inside $X(\mu, b)_P$. We say that the basic locus is a union of EO strata, if for all $w\in \EO^J(\mu)$ such that $X_{J,w}(b)\ne\emptyset$ for $b$ basic, we have $X_{J,w}(b')=\emptyset$ for all non-basic $b'$. For the formal definition of ``Coxeter type'' see Theorem~\ref{classification} and condition \CC in Section~\ref{6}.

The main results of this paper are summarized below. 

\begin{thrm1}
The data $(G, \mu, P)$ of Coxeter type are listed in Theorem \ref{classification}. 
\end{thrm1}

\begin{thrm2}
If $(G, \mu, P)$ is of Coxeter type, then 

\begin{enumerate}
\item 
The basic locus \[X(\mu, b)_P=\sqcup_{\Lambda} \cn_\Lambda^o,\] where $\Lambda$ runs over faces of certain types of the rational Bruhat-Tits building $\mathscr B$ of the $\s$-centralizer $\JJ_b$ of $b$, and $\cn_\Lambda^o$ is a classical Deligne-Lusztig set associated to a Coxeter element. 
\item
For all non-basic $b'$, the $\s$-centralizer $\JJ_{b'}$ acts transitively on $X(\mu, b')_P$. In particular, whenever there is a notion of dimension (in the RZ cases and in the function field case), this implies that $\dim X(\mu, b')_P = 0$.
\end{enumerate}
\end{thrm2}

In most cases, $\Lambda$ runs over all vertices  in $\mathscr B$. In other cases, edges of $\mathscr B$ appear. For more details, see Theorem \ref{classification}, Theorem \ref{main} and $\S$\ref{7}. 

\begin{thrm3}
In the function field case, if $(G, \mu, P)$ is of Coxeter type, then 
the closure relation among the strata $\cn^o_\Lambda$ can be described explicitly in terms of the simplicial structure of $\mathscr B$.
\end{thrm3}

The key ingredients in this paper are 
\begin{itemize}
\item The description of affine Deligne-Lusztig varieties in the affine flag variety \cite{He-GeometryOfADLV}.

\item The fine Deligne-Lusztig varieties \cite{Lu-par}.
\end{itemize}

In mixed characteristic, in general there is no known scheme structure on affine Deligne-Lusztig varieties. This is the technical difficulty preventing us from extending Theorem C to the mixed characteristic case. However, experience shows that, at least for the basic locus, the descriptions in the mixed characteristic case and in the equal characteristic case are mostly equal (see Sections~\ref{sec:5.3}, \ref{sec:examples}). Therefore we expect that in those cases which have not been treated in the context of Shimura varieties, our results can serve as guide lines for the precise result to be expected.

We focus on ``Coxeter type'' in this paper. However, our methods should extend to some other cases where the basic locus is still a union of EO strata but the EO strata there are not of Coxeter type in general. We include one example at the end of this paper. 

The paper is organized as follows. In section 1 we fix notation and give a group-theoretic definition of the basic locus. In section 2, we recollect properties of affine Deligne-Lusztig varieties in the affine flag variety. In section 3, we give a stratification, which includes Kottwitz-Rapoport stratification and Ekedahl-Oort stratification as special cases. In section 4, we study the fine affine Deligne-Lusztig varieties in the affine Grassmannian. Theorem A, Theorem B and the first two parts of Theorem C are stated in section 5 and proofs are given in section 6. Theorem C is proved in section 7. We also study the singularities of $\overline{\cn^o_\lambda}$ in section 7.

\emph{Acknowledgments.} We thank George Pappas, Michael Rapoport, Eva Viehmann and Xinwen Zhu for useful discussions, answering questions and pointing out mistakes in an earlier version. 

After the paper was finished, we received an email from Xinwen Zhu about his joint work in preparation with Liang Xiao \cite{XZ}.They study the basic affine Deligne-Lusztig varieties in the affine Grassmannian for some unramified quasi-split (but not split) groups. This gives a different approach to the basic loci of types $({}^2 A'_n, \o^\vee_1, \SS)$ for $n$ even, $({}^2 D_n, \o^\vee_1, \SS)$ and $({}^2 A_3, \o^\vee_2, \SS)$ in our list in Theorem \ref{classification}. They also give a description for some basic loci of non-Coxeter type. 

\section{Preliminaries}
\label{SEC:preliminaries}

\subsection{Notation}\label{sec:notation}

Let $\mathbb F_q$ be the finite field with $q$ elements. Let $\kk$ be an algebraic closure of $\mathbb F_q$. Let $F= \mathbb F_q( (\e))$ or a finite field extension of $\QQ_p$ with residue class field $\mathbb F_q$ and uniformizer $\e$, and let $L$ be the completion of the maximal unramified extension of $F$. 

Let $G$ be a connected semisimple group over $F$ which splits over a tamely ramified extension of $F$.  Let $\s$ be the Frobenius automorphism of $L/F$. We also denote the induced automorphism on $G(L)$ by $\s$.

Let $S \subset G$ be a maximal $L$-split torus defined over $F$ and let $T$ be its centralizer. Since $G$ is quasi-split over $L$, $T$ is a maximal torus of $G$. The {\it Iwahori-Weyl group} associated to $S$ is 
\[
\tW = N_S(L)/T(L)_1.
\]
Here $N_S$ denotes the normalizer of $S$ in $G$, and $T(L)_1$ denotes the unique parahoric subgroup of $T(L)$. For $w \in \tW$, we choose a representative in $N_S(L)$ and also write it as $w$. 

\subsection{Weyl groups} We denote by $\mathcal A$ the apartment of $G_L$ corresponding to $S$. We fix a $\sigma$-invariant alcove $\mathfrak a$ in $\ca$, and denote by $I\subseteq G(L)$ the Iwahori subgroup corresponding to $\mathfrak a$ over $L$.

The Iwahori-Weyl group $\tW$ is an extension of the {\it relative Weyl group} $W_0=N_S(L)/T(L)$ by $X_*(T)_\G$, where $\G=\Gal(\bar L/L)$ is the absolute Galois group of $L$. If we choose a special vertex of $\mathfrak a$, we may represent the Iwahori-Weyl group as a semidirect product $$\tW=X_*(T)_\G \rtimes W_0=\{t^\l w; \l \in X_*(T)_\G, w \in W_0\}.$$ 

We denote by $\tSS$ the set of simple reflections of $\tW$ and denote by $\SS \subset \tSS$ the set of simple reflections of $W_0$. Both $\tW$ and $\tSS$ are equipped with an action of $\s$. 

For any subset $J$ of $\tSS$, we denote by $W_J$ the subgroup of $\tW$ generated by the simple reflections in $J$ and by ${}^J \tW$ (resp. $\tW^J$) the set of minimal length elements for the cosets $W_J \backslash \tW$ (resp. $\tW/W_J$). We simply write ${}^J \tW^K$ for ${}^J \tW \cap \tW^K$. 

The subgroup $W_{\tSS}$ of $\tW$ is the affine Weyl group and we usually denote it by $W_a$. Then $$\tW=W_a \rtimes \Omega,$$ where $\Omega$ is the normalizer of the base alcove $\mathfrak a$. We may identify $\Omega$ with $\pi_1(G)_\G$.

\subsection{$\s$-conjugacy classes} We say that $b, b' \in G(L)$ are $\s$-conjugate if $b'=g \i b \s(g)$ for some $g \in G(L)$. We denote by $B(G)$ the set of $\s$-conjugacy classes of $G(L)$. The classification of the $\s$-conjugacy classes is obtained by Kottwitz in \cite{kottwitz-isoI} and \cite{kottwitz-isoII}. The description is as follows. 

An element $b \in G(L)$ determines a homomorphism $\mathbb D \to G_L$, where $\mathbb D$ is the pro-algebraic torus whose character group is $\QQ$. This homomorphism determines an element $\bar \nu_b$ in the closed dominant chamber $X_*(T)_\QQ^+$. The element $\bar \nu_b$ is called the {\it Newton point} of $b$ and the map $b \mapsto \bar \nu_b$ is called the {\it Newton map}. Let $\k_G: B(G) \to \pi_1(G)_{\G_F}$ be the Kottwitz map \cite[\S 7]{kottwitz-isoII}, where $\G_F$ is the absolute Galois group of $F$. By \cite[\S 4.13]{kottwitz-isoII}, the map $$B(G) \to X_*(T)_\QQ^+ \times \pi_1(G)_{\G_F}, \qquad b \mapsto (\bar \nu_b, \k_G(b))$$ is injective. The set $B(G)$ is equipped with a partial order, see~\cite{RR} Section 2.3.

\subsection{Straight conjugacy classes} Following \cite{He-GeometryOfADLV}, we relate $B(G)$ to the Iwahori-Weyl group $\tW$. 

For any $w \in \tW$, we consider the element $w \s \in \tW \rtimes \<\s\>$. There exists $n \in \NN$ such that $(w \s)^n=t^\l$ for some $\l \in X_*(T)_\G$. Let $\bar \nu_{w}$ be the unique dominant element in the $W_0$-orbit of $\l/n$. It is known that $\bar \nu_{w}$ is independent of the choice of $n$ and is $\G$-invariant. Moreover, $\bar \nu_w$ is the Newton point of $w$ when regarding $w$ as an element in $G(L)$. 

We say that an element $w$ is {\it $\s$-straight} if $\ell((w\s)^n)=n \ell(w)$. This is equivalent to saying that $\ell(w)=\<\bar \nu_{w}, 2 \rho\>$, where $\rho$ is the half sum of all positive roots in the root system of the affine Weyl group $W_a$. A $\s$-conjugacy class of $\tW$ is called {\it straight} if it contains a $\s$-straight element. 

The map $N_S(L) \to G(L)$ induces a map $\tW \to B(G)$. Here $\k_G(w)$ is the image of $w$ under the projection $\tW \to \Omega \cong \pi_1(G)_\G \to \pi_1(G)_{\G_F}$. By \cite[\S 3]{He-GeometryOfADLV}, the map induces a bijection from the set of straight $\s$-conjugacy classes of $\tW$ to $B(G)$. 

A $\s$-conjugacy class $[b]$ of $G(L)$ is called {\it basic} if $\bar \nu_b$ factors through the center of $G$. Again by \cite[\S 3]{He-GeometryOfADLV}, a $\s$-conjugacy class of $G(L)$ is basic if and only if it contains some element of $\Omega$.

\subsection{The variety $Z$} Let $\mu \in X_*(T)$ be a minuscule coweight. We denote by $\l$ its image in the coinvariants $X_*(T)_\G$. The {\it admissible subset} of $\tW$ associated to $\mu$ is defined as $$\Adm(\mu)=\{w \in \tW; w \le t^{x(\l)} \text{ for some } x \in W_0\}.$$ Note that $\l$ is not minuscule in $\tW$ in general (see also $\S$\ref{7.2}). We also denote by $\t$ the image of $t^\l$ under the projection map $\tW=W_a \rtimes \Omega \to \Omega$. 

Let $J \subset \tSS$. Let $P_J \supset I$ be the standard parahoric subgroup corresponding to $J$. Set $\Adm^J(\mu)=W_J \Adm(\mu) W_{\s(J)}$ and $$Y_J=\cup_{w \in \Adm(\mu)} P_J w P_{\s(J)}=\cup_{w \in \Adm^J(\mu)} I w I.$$ 

Define the action of $P_J$ on $G(L) \times Y_J$ by $p \cdot (g, y)=(g p \i, p y \s(p) \i)$ and denote by $Z_J$ its quotient. Then the map $(g, y) \mapsto (g y \s(g) \i, g P_J)$ gives an isomorphism $$Z_J \cong \{(b, g P_J) \in G(L) \times G(L)/P_J; g \i b \s(g) \in Y\}.$$

The image of the projection map $Z_J \to G(L)$ is a union of $\s$-conjugacy classes of $G(L)$ and we denote it by $B(G, \mu)_J$. In fact, $B(G, \mu)_J$ is independent of the subset $J$ we choose \cite{He-000}. However, we don't need this fact in this paper.

The basic $\s$-conjugacy class in $B(G, \mu)_J$ contains the element $\t$ and we denote this $\s$-conjugacy class by $\co_0$. 
We have the Newton stratification $$Z_J=\sqcup_{\co \in B(G, \mu)_J} Z_{J, \co},$$ where $Z_{J, \co}=\{(b, g P_J) \in Z_J; b \in \co\}$. The stratum $Z_{J, \co_0}$ is called the {\it basic locus in $Z_J$}. 

\section{Affine Deligne-Lusztig varieties}

\subsection{Affine Deligne-Lusztig varieties}\label{2} We first look at the case where $J=\emptyset$. Then $Y_\emptyset=\sqcup_{w \in \Adm(\mu)} I w I$ and we have the Kottwitz-Rapoport stratification $$Z_\emptyset=\sqcup_{w \in \Adm(\mu)} Z_{\emptyset, w},$$ where $Z_{\emptyset, w}=G(L) \times^I I w I$ for any $w \in \Adm(\mu)$. 

Given $w \in \Adm(\mu)$ and $\co \in B(G, \mu)_\emptyset$, the intersection $Z_{\emptyset, w} \cap Z_{\emptyset, \co}$ is a fiber bundle over $\co$ and the fiber over $b \in \co$ is the affine Deligne-Lusztig variety (in the affine flag variety) $$X_w(b)=\{g I \in G(L)/I; g \i b \s(g) \in I w I\} \subset G(L)/I.$$ Moreover, the $\s$-centralizer $\JJ_b=\{g \in G(L); g \i b \s(g)=b\}$ acts on $X_w(b)$. 

In the rest of this section, we recollect some results on affine Deligne-Lusztig varieties. The following result on $\s$-straight elements is proved in \cite[Proposition 4.5 \& Theorem 4.8]{He-GeometryOfADLV}. 

\begin{thm}\label{straightADLV}
If $w$ is $\s$-straight, then $X_{w}(b) \neq \emptyset$ if and only if $b$ is $\s$-conjugate to $w$. In this case, $$X_{w}(b) \cong X_{w}(w) \cong \JJ_{w}/(\JJ_{w} \cap I)$$ (which means that, in the function field case, $X_w(b)$ is $0$-dimensional).
\end{thm}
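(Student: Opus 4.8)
The plan is to reduce at once to the case $b=w$, to recognize $X_w(w)$ as a single $\JJ_w$-orbit, and to let essentially all the work be carried by one combinatorial fact about $\s$-straight elements: that $w$ fills up its own Iwahori double coset under $I$-$\s$-conjugation. So first I would record the reduction. If $b$ is $\s$-conjugate to $w$, write $b=h\i w\s(h)$; then $gI\mapsto hgI$ is a bijection $X_w(b)\to X_w(w)$, because $(hg)\i w\s(hg)=g\i b\s(g)$, and it is equivariant for the identification $\JJ_b=h\i\JJ_w h$. Conversely, if $X_w(b)\neq\emptyset$, pick $g$ with $g\i b\s(g)\in IwI$; then it is enough to know that every element of $IwI$ is $\s$-conjugate in $G(L)$ to $w$, for then $b$ lies in the $\s$-conjugacy class $[w]$ of $w$ and we are back in the previous case. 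Thus everything comes down to a single claim.

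\emph{The key claim.} If $w$ is $\s$-straight, then
\[
IwI=\{\,iw\s(i)\i\ :\ i\in I\,\},
\]
i.e.\ $IwI$ is a single $I$-$\s$-conjugacy class. Granting this, the theorem follows quickly. Since $iw\s(i)\i\in[w]$, we have $IwI\subseteq[w]$, so $X_w(b)\neq\emptyset$ forces $b\in[w]$; conversely, if $b\in[w]$ then $X_w(b)\cong X_w(w)$, which contains $eI$ (as $w\in IwI$) and so is non-empty. For the more precise statement take $b=w$. The inclusion $\JJ_w I/I\subseteq X_w(w)$ is a direct check: for $g=ji$ with $j\in\JJ_w$, $i\in I$, one has $g\i w\s(g)=i\i\bigl(j\i w\s(j)\bigr)\s(i)=i\i w\s(i)\in IwI$, using $j\in\JJ_w$ and $\s(I)=I$. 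For the reverse inclusion, if $g\i w\s(g)\in IwI$ then by the key claim there is $i\in I$ with $i\i\bigl(g\i w\s(g)\bigr)\s(i)=w$, i.e.\ $(gi)\i w\s(gi)=w$, so $gi\in\JJ_w$ and hence $g\in\JJ_w I$. Therefore $X_w(w)=\JJ_w I/I\cong\JJ_w/(\JJ_w\cap I)$. Finally $\JJ_w$ is the group of points of an inner form of a Levi subgroup of $G$, and $\JJ_w\cap I$ is a parahoric subgroup of it, so the quotient is a discrete set; in the function field case this realizes $X_w(w)$ as a $0$-dimensional (perfect) scheme.

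\emph{The main obstacle} is the key claim, and here $\s$-straightness enters in an essential way. The inclusion $\{iw\s(i)\i\}\subseteq IwI$ is trivial because $\s(I)=I$, so the content is $IwI\subseteq\{iw\s(i)\i\}$: every element of $IwI$ is $I$-$\s$-conjugate to $w$. I would prove this by successive approximation: write a given element of $IwI$ as $i_1wi_2$ with $i_1,i_2\in I$, and solve $iw\s(i)\i=i_1wi_2$ for $i\in I$ one congruence level at a time along the filtration of $I$ (equivalently along the affine root subgroups). At each step the equation for the new graded piece of $i$ is a Lang-type equation twisted by $w\s$; the point is that $\s$-straightness of $w$ --- equivalently, the absence of cancellation expressed by $\ell((w\s)^n)=n\,\ell(w)$ --- forces $w\s$ to permute these graded pieces compatibly with the filtration, so that (using that $\s$ is a Frobenius, hence Lang's theorem on the reductive quotient, together with convergence of the successive approximation on the pro-unipotent part) each such equation is solvable. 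This is the technical heart of the statement; it is carried out in \cite[Prop.~4.5 \& Thm.~4.8]{He-GeometryOfADLV}. Alternatively one can deduce it from the Deligne--Lusztig-type reduction procedure for affine Deligne--Lusztig varieties together with the fact that a $\s$-straight $w$ is irreducible for that procedure (its class polynomials being $f_{w,[w]}=1$ and $f_{w,\co}=0$ for $\co\neq[w]$), which simultaneously gives the non-emptiness criterion and the vanishing of the dimension. Since the argument is purely group-theoretic, it applies equally in the mixed characteristic and function field cases.
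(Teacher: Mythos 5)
Your argument is correct and in substance coincides with the paper's treatment: the paper states this theorem as a quotation of \cite{He-GeometryOfADLV} (Prop.~4.5 and Thm.~4.8), and your ``key claim'' that $IwI$ is a single $I$-$\s$-conjugacy class for $\s$-straight $w$ is precisely the content of those cited results, to which you also defer the technical heart. The formal reductions you supply around it (reduction to $b=w$, and the identification $X_w(w)=\JJ_w I/I\cong \JJ_w/(\JJ_w\cap I)$) are correct and are exactly how the cited results yield the theorem.
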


\subsection{Support} 
For $w \in W_a$, we denote by $\supp(w)$ the support of $w$, i.e., the set of $i \in \tSS$ such that $s_i$ appears in some (or equivalently, any) reduced expression of $w$. We set $$\supp_\s(w \t)=\bigcup_{n \in \ZZ} (\t \s)^n(\supp(w)).$$ 
Then $\supp_\s(w \t)$ is the minimal $\t \s$-stable subset $J$ of $\tSS$ such that $w \t \s\in W_J \rtimes \<\t \s\>\}$. 

If $\ell(w)=\sharp(\supp_\s(w \t)/\<\t \s\>)$, i.e., $w$ is a product of simple reflections in $W_a$ and the simple reflections from each orbit of $\t \s$ appears at most once, then we say that $w \t$ is a {\it $\s$-Coxeter element}. 



\begin{prop}\label{decomp-flag}
Let $w\in W_a \t$ such that $W_{\supp_\s(w)}$ is finite. Then 
\[
X_w(\t) = \coprod_{i\in \JJ_\t/(\JJ_\t \cap P_{\supp_\s(w)})} i Y(w),
\]
where
\[
Y(w) = \{ gI\in P_{\supp_\s(w)}/I;\ g\i \tau \s(g) \in IwI \}
\]
is a classical Deligne-Lusztig variety in the finite-dimensional flag variety $P_{\supp_\s(w)}/I$.
\end{prop}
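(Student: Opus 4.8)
The plan is to decompose the affine Deligne-Lusztig variety $X_w(\t)$ according to which parahoric coset a representative $g$ lands in, and then identify each piece with a classical Deligne-Lusztig variety. Write $J = \supp_\s(w)$ (so $J$ is a $\t\s$-stable subset of $\tSS$ with $W_J$ finite) and $P = P_J$. The first observation is the following reduction: if $gI \in X_w(\t)$, then $g\i\t\s(g) \in IwI \subseteq P$, since $w \in W_J$ (here I use that $w\t \in W_J \rtimes \langle \t\s\rangle$ forces $w \in W_J$ as $\t$ is the trivial element of $\Omega$ in this situation, having absorbed $\t$ into the notation $w \in W_a\t$). The key point to establish first is therefore: \emph{the element $\t\s$ stabilizes the building point corresponding to $P$}, i.e.\ $\t \in N(P)$ up to the relevant normalization, so that $P$ is a $\t\s$-stable parahoric and the twisted conjugation $g \mapsto g\i\t\s(g)$ makes sense fibre-by-fibre on the flag variety $P/I$.

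Next I would show that the projection $G(L)/I \to G(L)/P$ restricts, on $X_w(\t)$, to a map whose image is exactly the $\JJ_\t$-orbit of the base point $eP$. One inclusion is immediate; for the other, suppose $gP$ is in the image, so there is $g' \in gP$ with $g'\i\t\s(g') \in IwI \subseteq P$. Then $g'\i\t\s(g') \in P$ means $g'P$ is fixed by the twisted conjugation action of $\t$ on $G(L)/P$, equivalently $g'$ represents a point in the affine Deligne-Lusztig variety $X_{e}^P(\t)$ at parahoric level $P$ for the trivial element. Since $\t \in \Omega$ and $P$ is $\t\s$-stable, one invokes Theorem \ref{straightADLV} (or rather its parahoric-level consequence, together with the fact that $\t$ is $\s$-straight of length zero): the set of such $gP$ is a single $\JJ_\t$-orbit, isomorphic to $\JJ_\t/(\JJ_\t \cap P)$. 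This gives the index set $\JJ_\t/(\JJ_\t\cap P_{\supp_\s(w)})$ of the coproduct, and since $\JJ_\t$ acts on $X_w(\t)$, translating by coset representatives $i$ identifies the fibre over $iP$ with $i$ times the fibre over $eP$.

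Finally I would identify the fibre over the base point. This fibre is $\{gI \in P/I : g\i\t\s(g) \in IwI\}$, which is literally $Y(w)$ as defined in the statement. That this is a classical Deligne-Lusztig variety in the finite-dimensional flag variety $P/I$ is the standard fact: the reductive quotient $P/I$ of the parahoric $P$ (its special fibre) is a flag variety $\overline{G}_J/B$ for the reductive group $\overline{G}_J$ over $\mathbb F_q$ with Weyl group $W_J$ and Frobenius induced by $\t\s$, the Iwahori $I$ maps to the Borel $B$, and $IwI/I$ maps to the Bruhat cell of $w \in W_J$; the condition $g\i\t\s(g)\in IwI$ descends exactly to the Deligne-Lusztig condition $g\i F(g) \in BwB$. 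This last identification uses that $w \in W_J$ so the whole computation takes place inside $P$, which is why the hypothesis $W_{\supp_\s(w)}$ finite (equivalently $w$ lies in a finite parabolic of $W_a$) is essential.

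The main obstacle I expect is the middle step: showing rigorously that the image of $X_w(\t)$ in $G(L)/P$ is a single $\JJ_\t$-orbit and that this orbit is $\JJ_\t/(\JJ_\t\cap P)$. One has to be careful that the twisted-conjugation-fixed points of $\t$ on $G(L)/P$ really do form one orbit — this is where $\t$ being basic (lying in $\Omega$) and $P$ being a $\t\s$-stable parahoric are both used, via Lang's theorem applied to the connected group $P$ and a descent/Galois-cohomology argument (the relevant $H^1(\langle\t\s\rangle, P)$ or $H^1$ of the reductive quotient vanishes by Lang, so there is no obstruction and the orbit is transitive). Once that is in place, the rest is bookkeeping with the Iwahori-Bruhat decomposition and the dictionary between parahoric subgroups and their reductive quotients.
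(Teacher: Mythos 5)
You follow essentially the same route as the paper: show that every $gI\in X_w(\t)$ satisfies $g\in\JJ_\t P_{\supp_\s(w)}$, decompose $\JJ_\t P_{\supp_\s(w)}/I$ into the cosets $iP_{\supp_\s(w)}/I$, use that left translation by $i\in\JJ_\t$ preserves the condition $g\i\t\s(g)\in IwI$ to identify the fibre over $iP_{\supp_\s(w)}$ with $iY(w)$, and recognize $Y(w)$ as a classical Deligne--Lusztig variety via the reductive quotient of the parahoric. The only genuine difference is how the key containment $g\in\JJ_\t P_{\supp_\s(w)}$ is obtained: the paper invokes \cite[Lemma 3.2 \& Proposition 4.5]{He-GeometryOfADLV} to $\s$-conjugate $g\i\t\s(g)$ by an element of $P_{\supp_\s(w)}$ all the way to $\t$, whereas you use only the coarser condition $g\i\t\s(g)\in P\t\s(P)$ together with surjectivity of the Lang map $p\mapsto p\,F(p)\i$ on the $\t\s$-stable parahoric $P=P_{\supp_\s(w)}$, with $F=\mathrm{Ad}(\t)\circ\s$ (Lang for the reductive quotient plus successive approximation along the congruence filtration). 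That is a correct and essentially equivalent substitute, so your plan goes through.

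One assertion needs repair, although it does not derail the argument: $\t$ is \emph{not} the trivial element of $\Omega$ (it is the image of $t^\mu$ in $\Omega$ and is nontrivial in the cases of interest), so the claim $IwI\subseteq P$ is false in general. What is true, and all that your argument actually uses, is $IwI=I(w\t\i)I\,\t\subseteq P\t=P\t\s(P)$, because $\supp_\s(w)$ is $\t\s$-stable by construction, hence $\t\s(P)\t\i=P$. Correspondingly, in the final step the condition $g\i\t\s(g)\in IwI$ for $g\in P$ descends on the reductive quotient to the Deligne--Lusztig condition for the \emph{twisted} Frobenius $F$ and the element $w\t\i\in W_{\supp_\s(w)}$, not for the untwisted one. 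Note also that for the stated disjoint-union decomposition you only need the image of $X_w(\t)$ in $G(L)/P$ to be \emph{contained in} the $\JJ_\t$-orbit of the base point; the equality you aim for does hold (classical Deligne--Lusztig varieties are nonempty) but is not needed.
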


The proposition follows from the proof of \cite[Theorem 4.7]{He-GeometryOfADLV}. For the sake of completeness and because the reasoning simplifies in our setting, we reproduce the relevant part of the proof in loc.~cit.

\begin{proof}
Let $g \in G(L)$ with $g\i \t\s(g)\in IwI$. By \cite[Lemma 3.2 \& Proposition 4.5]{He-GeometryOfADLV}, there exists $p\in P_{\supp_\s(w)}$, such that $(gp)\i \t\s(gp)=\t$. Hence $g\in \JJ_\t P_{\supp_\s(w)}$, and 
\[
X_w(\t)=\{g I \in \JJ_\t P_{\supp_\s(w)}/I; g \i \t \s(g) \in I w I\} \subset \JJ_\t P_{\supp_\s(w)}/I.
\]

Note that $\JJ_\t P_{\supp_\s(w)}/I=\sqcup_{i \in \JJ_\t/(\JJ_\t \cap P_{\supp_\s(w)})} i P_{\supp_\s(w)}/I$ and for any $g \in P_{\supp_\s(w)}$ and $i \in \JJ_\t$, $(i g) \i \t \s(i g) \in I w I$ if and only if $g \i \t \s(g) \in I w I$. Hence 
\[
X_w(\t) = \coprod_{i\in \JJ_\t/(\JJ_\t \cap P_{\supp_\s(w)})} Y(w).
\]
\end{proof}

\section{$P_J$-stable pieces}


\subsection{Partial conjugation action} Let $J \subset \tSS$. The partial conjugation action of $W_J$ on $\tW$ defined by $x \cdot_\s y=x y \s(x) \i$ for $x \in W_J$ and $y \in \tW$. 

Given $w, w' \in \tW$ and $j \in J$, we write $w \xrightarrow{s_j}_\s w'$ if $w'=s_j w s_{\s(j)}$ and $l(w') \le l(w)$. If $w=w_0, w_1, \cdots, w_n=w'$ is a sequence of elements in $\tW$ such that for all $k$, we have $w_{k-1} \xrightarrow{s_j}_\s w_k$ for some $j \in J$, then we write $w \rightarrow_{J, \s} w'$. We write $w \approx_{J, \s} w'$ if $w \to_{J, \s} w'$ and $w' \to_{J, \s} w$.

The following property is proved in \cite[Proposition 3.4]{HeMin}.

\begin{prop}\label{min}
For any $w \in \tW$, there exists a minimal length element $w' \in W_J \cdot_\s w$ such that $w \to_{J, \s} w'$. Moreover, we may take $w'$ to be of the form $v w_1$ with $w_1 \in {}^J \tW$ and $v \in W_{I(J, w_1, \s)}$. Here $I(J, w_1, \s)=\max\{K \subset J; \text{Ad}(w_1) \s(K)=K\}$. 

\end{prop}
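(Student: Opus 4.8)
The statement splits naturally into a \emph{reduction step} and a \emph{normal form}. The reduction step asserts: if $w$ is \emph{not} of minimal length in its orbit $W_J \cdot_\s w$, then there are $j \in J$ and $w''$ with $w \xrightarrow{s_j}_\s w''$ and $\ell(w'') < \ell(w)$. The normal form asserts: a minimal length element of $W_J \cdot_\s w$ which is reachable from $w$ by $\to_{J,\s}$ may be chosen of the shape $v w_1$ with $w_1 \in {}^J \tW$ and $v \in W_K$, where $K := I(J, w_1, \s)$. Granting the reduction step, among all elements $\to_{J,\s}$-reachable from $w$ choose one of minimal length, say $w_\ast$; applying the reduction step to $w_\ast$ shows $w_\ast$ is already of minimal length in the full orbit $W_J \cdot_\s w = W_J \cdot_\s w_\ast$, which is the first assertion of the proposition, and the normal form then finishes it.

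For the reduction step I would run the Geck--Pfeiffer type argument, adapted to $\s$-twisted conjugation. Choose $x \in W_J$ with $\ell(x w \s(x)\i) \le \ell(w)$ and $x w \s(x)\i \ne w$, of minimal length among all such, and fix a reduced expression $x = s_{j_1} \cdots s_{j_r}$. Writing $x \cdot_\s w = s_{j_1} \cdot_\s (\cdots \cdot_\s (s_{j_r} \cdot_\s w) \cdots)$ and tracking the lengths of the iterated partial twisted conjugates, one uses the minimality of $\ell(x)$ together with the exchange and deletion conditions to exhibit an \emph{elementary} move applied to $w$ itself (rather than to some longer intermediate element) which is length non-increasing. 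If this move strictly decreases the length, we are done. If it is length-preserving but nontrivial, one replaces $w$ by the resulting element of equal length and argues, again via the exchange condition, that the obstructing minimal conjugator has become strictly shorter; a secondary induction on the length of this conjugator then terminates and produces a strictly length-decreasing elementary move. I expect this case analysis --- in particular showing that the ``flat'' moves with $\ell(s_j w s_{\s(j)}) = \ell(w)$ but $s_j w \ne w s_{\s(j)}$ cannot persist indefinitely, and that performing such a move genuinely simplifies the conjugator --- to be the main obstacle and the technical core.

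For the normal form, let $w_\ast$ be a minimal length element of the orbit reachable from $w$ by $\to_{J,\s}$, and write $w_\ast = v w_1$ with $w_1 \in {}^J \tW$, $v \in W_J$, so that $\ell(w_\ast) = \ell(v) + \ell(w_1)$; the point is to analyze the left descents of $v$. If $j \in J$ is a left descent of $v$, then $\ell(s_j w_\ast) = \ell(w_\ast) - 1$, so the orbit element $s_j \cdot_\s w_\ast = (s_j w_\ast)\, s_{\s(j)}$ has length $\ell(w_\ast) - 1 \pm 1$; minimality of $w_\ast$ forces the value $\ell(w_\ast)$, i.e.\ $(s_j w_\ast)\, s_{\s(j)} > s_j w_\ast$, which is the positivity relation $(s_j v)\bigl(w_1\, \s(\alpha_j)\bigr) > 0$. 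Since $w_1 \in {}^J \tW$, whenever $w_1\, \s(\alpha_j)$ is $\pm$ a $J$-simple root it is in fact positive; feeding this into the positivity relations for all left descents of $v$, one deduces that $\supp(v)$, the set of simple reflections occurring in a reduced expression of $v$, satisfies $\text{Ad}(w_1)\s(\supp(v)) = \supp(v)$, whence $\supp(v) \subseteq K = I(J, w_1, \s)$ by maximality of $K$, so that $v \in W_K$. (Whenever $\supp(v)$ is not yet $\text{Ad}(w_1)\s$-stable, the relevant elementary move is length-preserving --- by minimality of $w_\ast$ --- and changes $w_1$ while keeping $\ell(w_\ast)$ fixed; one then continues with the new $W_J$-part, the process stabilizing because $\ell(w_\ast)$ stays minimal.)
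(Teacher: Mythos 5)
Your proposal is a proof plan rather than a proof, and the two places where it defers the work are exactly the places where the content of the proposition lies. Note first that the paper itself does not prove this statement: it is quoted from \cite[Proposition 3.4]{HeMin}, so the benchmark is the Geck--Pfeiffer-type argument carried out there. Your ``reduction step'' is indeed the right kind of statement, but your sketch explicitly leaves open its technical core: you say you expect the termination of the length-preserving (``flat'') moves, and the claim that performing such a move strictly shortens the minimal conjugator, to be ``the main obstacle''. That obstacle \emph{is} the theorem; no monovariant or exchange-condition analysis is actually produced, so the reduction step is asserted, not proved.

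The normal-form part has a genuine flaw beyond incompleteness. It is not true that an arbitrary minimal-length element $w_\ast=vw_1$ reachable from $w$ has $v\in W_{I(J,w_1,\s)}$; the proposition only says one \emph{may} reach such an element. Already for $J=\{s_1\}$ in a subsystem of type $A_2$ with $\s=\id$, the element $w_\ast=s_1s_2$ is of minimal length in its $W_J$-conjugacy class, its positivity relation $(s_1v)\bigl(w_1(\alpha_1)\bigr)=\alpha_1+\alpha_2>0$ holds, yet $\supp(v)=\{s_1\}$ is not $\mathrm{Ad}(w_1)$-stable and $v\notin W_{I(J,w_1,\id)}=\{1\}$ (here one must still move $s_1s_2\to_{J}s_2s_1$ by a length-preserving step). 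So your inference ``positivity for all left descents of $v$ implies $\mathrm{Ad}(w_1)\s(\supp(v))=\supp(v)$'' is false as stated, and the burden falls entirely on your parenthetical iteration, whose termination is justified only by ``$\ell(w_\ast)$ stays minimal'' --- but $\ell(w_\ast)$ is constant along these moves, so it cannot serve as a decreasing quantity and a priori the process could cycle. To close the argument you would need the kind of auxiliary lemma used in \cite{HeMin} (e.g.\ that for $w_1\in{}^J\tW$ and $x\in W_J$ one has $xw_1\to_{J,\s}uw_1'$ with $w_1'\in{}^J\tW$ and $u\in W_{I(J,w_1',\s)}$, proved by induction on $\ell(x)$), together with an honest induction for the reduction step; as written, neither is supplied.
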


\subsection{The subset $P_J \cdot_\s I w I$} \label{py} By \cite[section 2]{He-aff}, 

(1) If $w \approx_{J, \s} w'$, then $P_J \cdot_\s I w I=P_J \cdot_\s I w' I$. 

(2) If $w \xrightarrow{s_i}_\s w'$ with $\ell(w')<\ell(w)$, then $P_J \cdot_\s I w I=P_J \cdot_\s I w' I \cup P_J \cdot_\s I s_i w I$. 

(3) If $w \in {}^J \tW$ and $x \in W_{I(J, w, \s)}$, then $P_J \cdot_\s I x w I=P_J \cdot_\s I w I$. 

A subset of $G(L)$ of the form $P_J \cdot_\s I w I$ for some $w \in {}^J \tW$ is called a {\it $P_J$-stable piece}. It is analogous to the $G$-stable pieces introduced by Lusztig in \cite{Lu-Par1}. It is showed in \cite[1.4]{Lu} and \cite[Proposition 2.5 \& 2.6]{He-aff} that $$G(L)=\sqcup_{w \in {}^J \tW} P_J \cdot_\s I w I.$$

The following result is essentially contained in the proof of \cite[Proposition 2.5]{He-aff}. 

\begin{thm}
Let $w \in \tW$. Then $$P_J w P_{\s(J)}=\sqcup_{x \in W_J w W_{\s(J)} \cap {}^J \tW} P_J \cdot_\s I x I.$$ 
\end{thm}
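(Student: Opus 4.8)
The plan is to deduce the statement from the decomposition $G(L)=\sqcup_{x\in {}^J\tW}P_J\cdot_\s IxI$ recalled just above, by intersecting both sides with the double coset $P_JwP_{\s(J)}$. First I would observe that $P_JwP_{\s(J)}$ is itself a union of $P_J$-stable pieces: indeed $P_JwP_{\s(J)}=\bigcup_{v\in W_J,\ v'\in W_{\s(J)}}Ivwv'I$ is visibly stable under the partial conjugation action $p\cdot_\s(-)$ of $P_J$ (for $p\in P_J$ and $h\in P_JwP_{\s(J)}$ one has $php\s(p)\i\in P_J\cdot P_JwP_{\s(J)}\cdot \s(P_J)\i$, and $\s(P_J)=P_{\s(J)}$, so this again lies in $P_JwP_{\s(J)}$). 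Hence the pieces $P_J\cdot_\s IxI$ appearing in $P_JwP_{\s(J)}$ are exactly those $x\in{}^J\tW$ for which $IxI\subseteq P_JwP_{\s(J)}$, and it remains to identify this index set with $W_JwW_{\s(J)}\cap{}^J\tW$.

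Next I would check the two inclusions between $\{x\in{}^J\tW : IxI\subseteq P_JwP_{\s(J)}\}$ and $W_JwW_{\s(J)}\cap{}^J\tW$. For ``$\supseteq$'': if $x\in W_JwW_{\s(J)}$, write $x=v w v'$ with $v\in W_J$, $v'\in W_{\s(J)}$; then $IxI\subseteq IvI\cdot IwI\cdot Iv'I\subseteq P_JwP_{\s(J)}$ by the standard multiplicativity of Iwahori double cosets inside $P_J$ on the left and $P_{\s(J)}$ on the right. For ``$\subseteq$'': suppose $x\in{}^J\tW$ with $IxI\subseteq P_JwP_{\s(J)}$. By the Iwahori–Bruhat decomposition of $P_J$ and $P_{\s(J)}$ we have $P_JwP_{\s(J)}=\bigsqcup_{v\in W_J,\,v'\in W_{\s(J)}\cap{}^{K}\tW} (\dots)$; more simply, $P_JwP_{\s(J)}$ is a union of Iwahori double cosets $IyI$ with $y\in W_JwW_{\s(J)}$, so $x\in W_JwW_{\s(J)}$ as desired. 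Here I would invoke the well-known fact that $P_JwP_{\s(J)}=\bigsqcup_{y}IyI$ with $y$ ranging over the appropriate set of representatives of $W_J\backslash W_JwW_{\s(J)}/W_{\s(J)}$-cosets, or equivalently that $IyI\subseteq P_JwP_{\s(J)}\iff y\in W_JwW_{\s(J)}$.

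Finally, to conclude that the union is \emph{disjoint}, I would note that distinct $x,x'\in{}^J\tW$ give disjoint $P_J$-stable pieces $P_J\cdot_\s IxI$ and $P_J\cdot_\s Ix'I$ — this is precisely the content of the decomposition $G(L)=\sqcup_{x\in{}^J\tW}P_J\cdot_\s IxI$ cited from \cite{Lu} and \cite[Prop. 2.5 \& 2.6]{He-aff} — so restricting to those pieces landing in $P_JwP_{\s(J)}$ preserves disjointness. Assembling the three points: $P_JwP_{\s(J)}$, being $P_J\cdot_\s(-)$-stable and a union of Iwahori double cosets, equals the union over $x\in{}^J\tW$ with $IxI\subseteq P_JwP_{\s(J)}$ of $P_J\cdot_\s IxI$, this index set is $W_JwW_{\s(J)}\cap{}^J\tW$, and the union is disjoint. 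The main obstacle I anticipate is purely bookkeeping: pinning down the precise statement ``$IyI\subseteq P_JwP_{\s(J)}\iff y\in W_JwW_{\s(J)}$'' with the right set of coset representatives (one must be a little careful because $W_{\s(J)}$ need not equal $W_J$ and the length additivity $\ell(vwv')=\ell(v)+\ell(w)+\ell(v')$ holds only for suitable $v,v'$), but this is a standard feature of $BN$-pairs and is implicit in the proof of \cite[Proposition 2.5]{He-aff}, which is exactly the reference the authors point to.
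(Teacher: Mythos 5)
Your argument is correct, but it takes a genuinely different route from the paper's. You treat the global decomposition $G(L)=\sqcup_{x\in{}^J\tW}P_J\cdot_\s IxI$ (cited from \cite{Lu} and \cite{He-aff}) as a black box and deduce the theorem from two observations: $P_JwP_{\s(J)}$ is a union of Iwahori double cosets $Iw'I$ with $w'\in W_JwW_{\s(J)}$, and it is stable under the twisted conjugation $h\mapsto ph\s(p)\i$ of $P_J$ because $\s(P_J)=P_{\s(J)}$. The one step your ``Hence'' leaves implicit deserves a line, since a $P_J$-stable piece is a union of many $\cdot_\s$-orbits and stability of $P_JwP_{\s(J)}$ alone does not formally make it a union of pieces: if $P_J\cdot_\s IxI$ meets $P_JwP_{\s(J)}$, write a common point as $p\cdot_\s y$ with $y\in IxI$; then $y=p\i\cdot_\s(p\cdot_\s y)\in P_JwP_{\s(J)}$ by stability, hence $IxI\subseteq P_JwP_{\s(J)}$, i.e.\ $x\in W_JwW_{\s(J)}$, and by stability again the whole piece lies in the double coset. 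With this inserted, your two inclusions together with the disjointness inherited from the global decomposition give exactly the statement. The paper instead argues self-containedly: it writes $P_JwP_{\s(J)}=\sqcup_{w'\in W_JwW_{\s(J)}}Iw'I$ and proves by induction on length, using Proposition \ref{min} and the reduction properties recalled in \S\ref{py}, that each $Iw'I$ is contained in $\sqcup_{x\in W_JwW_{\s(J)}\cap{}^J\tW}P_J\cdot_\s IxI$, without invoking the global decomposition as input. What your route buys is brevity and the observation that the refined statement is a formal consequence of the coarse decomposition plus the $\s$-stability of the parahorics; what it costs is that all the inductive content is outsourced to the cited results, whose proofs run essentially the same induction the paper reproduces here.
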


\begin{proof}
It is obvious that $\sqcup_{x \in W_J w W_{\s(J)} \cap {}^J \tW} P_J \cdot_\s I x I \subset P_J w P_{\s(J)}$. Now we prove that $P_J w P_{\s(J)} \subset \sqcup_{x \in W_J w W_{\s(J)} \cap {}^J \tW} P_J \cdot_\s I x I$. 

Notice that $P_J w P_{\s(J)}=\sqcup_{w' \in W_J w W_{\s(J)}} I w' I$. We argue by induction that $I w' I \subset \sqcup_{x \in W_J w W_{\s(J)} \cap {}^J \tW} P_J \cdot_\s I x I$ for any $w' \in W_J w W_{\s(J)}$. 

If $w'$ is a minimal length element in $W_J \cdot_\s w'$, then $w' \approx_{J, \s} v x$ for some $x \in {}^J \tW$ and $v \in W_{I(J, x, \s)}$. In this case, $x \in W_J w' W_{\s(J)}=W_J w W_{\s(J)}$. The statement follows from $\S$\ref{py} (1) \& (3). 

If $w'$ is not a minimal length element in $W_J \cdot_\s w'$, then there exists $w'' \approx_{J, \s} w'$ and $i \in J$ such that $\ell(s_i w'' s_{\s(i)})<\ell(w')$. The statement follows from the induction hypothesis and $\S$\ref{py} (1) \& (2). 
\end{proof}

\subsection{A partial order on ${}^J \tW$}\label{4.3} We introduce $\le_{J, \s}$ as follows. For $w \in {}^J \tW$ and $w' \in \tW$, we write $w \le_{J, \s} w'$ if there exists $x \in W_J$ such that $x w \s(x) \i \le w'$. By \cite[4.7]{HeMin}, $\le_{J, \s}$ gives a partial order on ${}^J \tW$. For $w, w' \in {}^J \tW$, 

(1) $w \le w'$ implies that $w \le_{J, \s} w'$;

(2) $w \le_{J, \s} w'$ implies that $\ell(w) \le \ell(w')$. 

It is proved in \cite[Proposition 2.6]{He-aff} that if $F=\FF_q((\e))$, then $$\overline{P_J \cdot_\s I w I}=\sqcup_{x \in {}^J \tW, x \le_{J, \s} w} P_J \cdot_\s I x I.$$ 

\subsection{A stratification of $Z_J$} Since $Y_J=\sqcup_{w \in \Adm^J(\mu) \cap {}^J \tW} P_J \cdot_\s I w I$, we have the stratification $$Z_J=\sqcup_{w \in \Adm^J(\mu) \cap {}^J \tW} Z_{J, w},$$ where $Z_{J, w}=G(L) \times^{P_J} (P_J \cdot_\s I w I)$. This includes as special cases the Kottwitz-Rapoport stratification discussed in $\S$\ref{2} and the Ekedahl-Oort stratification we will discuss in $\S$\ref{setup}. See also~\cite{He-Wedhorn}.

Given $w \in \Adm^J(\mu) \cap {}^J \tW$ and $\co \in B(G, \mu)_J$, the intersection $Z_{J, w} \cap Z_{J, \co}$ is a fiber bundle over $\co$ and the fiber over $b \in \co$ is
\begin{equation}\label{EOstrat}
X_{J,w}(b):= \{g P_J; g \i b \s(g) \in P_J \cdot_\s I w I\} \subset G(L)/P_J.
\end{equation}
It is the image of $X_{w}(b)$ under the projection map $\pi_J: G(L)/I \to G(L)/P_J$. We call $X_{J, w}(b)$ a {\it fine affine Deligne-Lusztig variety} in $G(L)/P_J$. 

\section{Fine affine Deligne-Lusztig varieties}

\subsection{(Coarse) affine Deligne-Lusztig varieties} For any $J \subset \tSS$, we have another stratification $$G(L)/P_J=\sqcup_{w \in {}^J \tW^{\s(J)}} \{g P_J; g \i b \s(g) \in P_J w P_{\s(J)}\}.$$ Each subset $\{g P_J; g \i b \s(g) \in P_J w P_{\s(J)}\}$ is a union of fine affine Deligne-Lusztig varieties. We call it a {\it (coarse) affine Deligne-Lusztig variety} in $G(L)/P_J$. Similar to the proof of Proposition \ref{decomp-flag}, we have 

\begin{prop}\label{decomp-flag2}
Let $J \subset \tSS$ and $w \in {}^J \tW^{\s(J)} \cap W_a \t$ such that $\text{Ad}(w) \s(J)=J$. If $W_{\supp_\s(w)\cup J}$ is finite, then $$\{g P_J; g \i \t \s(g) \in P_J w P_{\s(J)}\}=\sqcup_{i \in \JJ_\t/(\JJ_\t \cap P_{\supp_\s(w) \cup J})} i Y_J(w),$$ where $Y_J(w)=\{g P_J \in P_{\supp_\s(w) \cup J}/P_J; g \i \t \s(g) \in P_J w P_{\s(J)}\}$ is a classical Deligne-Lusztig variety in the partial flag variety $P_{\supp_\s(w) \cup J}/P_J$. 
\end{prop}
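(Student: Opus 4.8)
The plan is to mimic the proof of Proposition~\ref{decomp-flag} essentially verbatim, replacing the Iwahori $I$ by the parahoric $P_J$ and the flag variety $G(L)/I$ by $G(L)/P_J$ throughout. The key point that makes the decomposition work is, as before, a ``reduction to $\t$'' step: I would start with $g\in G(L)$ satisfying $g\i\t\s(g)\in P_JwP_{\s(J)}$ and show that there exists $p\in P_{\supp_\s(w)\cup J}$ with $(gp)\i\t\s(gp)=\t$. Here the hypothesis $\mathrm{Ad}(w)\s(J)=J$ guarantees that $w\t\s$ (or rather the relevant conjugation action) stabilizes $J$, so that $P_JwP_{\s(J)}=P_{\supp_\s(w)\cup J}\cdot_\s (P_JwP_{\s(J)})$ makes sense inside the reductive quotient of $P_{\supp_\s(w)\cup J}$, and one reduces to a statement about a single connected reductive group over $\kk$, namely that the $\s$-twisted Bruhat cell containing $w$ contracts onto the identity component; this is exactly the finite-group analogue used in \cite[Lemma~3.2 \& Proposition~4.5]{He-GeometryOfADLV} applied inside $P_{\supp_\s(w)\cup J}$, whose reductive quotient is the finite group with Weyl group $W_{\supp_\s(w)\cup J}$ — finite by hypothesis.

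Once this reduction is in place, the argument is formal. From $(gp)\i\t\s(gp)=\t$ we get $g\in\JJ_\t P_{\supp_\s(w)\cup J}$, hence
\[
\{gP_J;\ g\i\t\s(g)\in P_JwP_{\s(J)}\}=\{gP_J\in \JJ_\t P_{\supp_\s(w)\cup J}/P_J;\ g\i\t\s(g)\in P_JwP_{\s(J)}\}.
\]
Next I would use the disjoint decomposition $\JJ_\t P_{\supp_\s(w)\cup J}/P_J=\sqcup_{i\in\JJ_\t/(\JJ_\t\cap P_{\supp_\s(w)\cup J})} iP_{\supp_\s(w)\cup J}/P_J$, together with the elementary observation that for $g\in P_{\supp_\s(w)\cup J}$ and $i\in\JJ_\t$ one has $(ig)\i\t\s(ig)=g\i\t\s(g)$ (since $i\i\t\s(i)=\t$, using $\t\s$-equivariance), so the condition $g\i\t\s(g)\in P_JwP_{\s(J)}$ is insensitive to the $\JJ_\t$-translation. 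This yields the claimed coproduct over $i\in\JJ_\t/(\JJ_\t\cap P_{\supp_\s(w)\cup J})$, each piece being a copy of
\[
Y_J(w)=\{gP_J\in P_{\supp_\s(w)\cup J}/P_J;\ g\i\t\s(g)\in P_JwP_{\s(J)}\}.
\]

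Finally I would identify $Y_J(w)$ as a classical Deligne-Lusztig variety in the partial flag variety $P_{\supp_\s(w)\cup J}/P_J$: passing to the finite reductive quotient $\bar G$ of $P_{\supp_\s(w)\cup J}$ (a connected reductive group over $\kk$ with a Frobenius-like endomorphism induced by $\t\s$), the subgroups $P_J$ and $P_{\s(J)}$ map to parabolic subgroups, and $Y_J(w)$ becomes $\{\bar g\bar P_J;\ \bar g\i\,\bar\t\,\s(\bar g)\in\bar P_J\bar w\bar P_{\s(J)}\}$, which is precisely a (generalized, partial-flag) Deligne-Lusztig variety attached to the twisted Weyl-group element $w$; the condition $\mathrm{Ad}(w)\s(J)=J$ is exactly what makes $\bar P_J\bar w\bar P_{\s(J)}$ a single well-defined double coset of the right type. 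The main obstacle, as in Proposition~\ref{decomp-flag}, is the reduction-to-$\t$ step: one must check carefully that the cited results of \cite{He-GeometryOfADLV}, stated for Iwahori-level cells, apply to the double coset $P_JwP_{\s(J)}$ inside the parahoric $P_{\supp_\s(w)\cup J}$ — equivalently, that the relevant statement about contracting a Bruhat cell onto a point holds for a partial flag variety and not just the full flag variety. This is where the hypothesis $\mathrm{Ad}(w)\s(J)=J$ is essential, and it is the only place the proof differs in substance from the Iwahori case; everything else is a routine translation of the proof of Proposition~\ref{decomp-flag}.
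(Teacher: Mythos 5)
Your proposal is correct and follows essentially the same route as the paper, which itself proves Proposition~\ref{decomp-flag2} by the remark ``similar to the proof of Proposition~\ref{decomp-flag}'': reduce to $\t$ inside the finite-type parahoric $P_{\supp_\s(w)\cup J}$ (e.g.\ by writing $P_JwP_{\s(J)}$ as a union of Iwahori double cosets $Iw'I$ with $\supp_\s(w')\subseteq\supp_\s(w)\cup J$ and invoking the cited results of \cite{He-GeometryOfADLV}, or directly by Lang's theorem as in Section~\ref{sec:4.5}), and then conclude by the same coset decomposition and $\JJ_\t$-invariance argument. The hypothesis $\mathrm{Ad}(w)\s(J)=J$ enters exactly as you indicate, ensuring that $\supp_\s(w)\cup J$ is $\t\s$-stable and that $Y_J(w)$ is a classical Deligne--Lusztig variety in the partial flag variety of the reductive quotient.
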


The main result we prove in this section (see Sections~\ref{sec:4.5}, \ref{sec:4.6}) is the following theorem which relates fine affine Deligne-Lusztig varieties with (coarse) affine Deligne-Lusztig varieties. 

\begin{thm}\label{fineADLV}
For any $J \subset \tSS$ and $w \in {}^J \tW$, 

(1) $X_{J, w}(b) \cong \{g P_{I(J, w, \s)}; g \i b \s(g) \in P_{I(J, w, \s)} w P_{\s(I(J, w, \s))}\}$. 

(2) If $F=\FF_q((\e))$, then $\dim X_{J, w}(b)=\dim X_w(b)$. 
\end{thm}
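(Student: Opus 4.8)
The two statements are really one, so I would treat them together. The key structural input is Proposition \ref{min}: every $w\in{}^J\tW$ can be written after $W_J$-partial conjugation in the form $v w_1$ with $w_1\in{}^J\tW$ and $v\in W_{I(J,w,\s)}$, and in our situation $w$ is already in ${}^J\tW$, so taking $v=e$ we see that $\mathrm{Ad}(w)\s(I(J,w,\s))=I(J,w,\s)$. Write $K=I(J,w,\s)$ for brevity. The plan is first to analyze the fibers of the projection $\pi_{J,K}\colon G(L)/P_K\to G(L)/P_J$ restricted to the locus cut out on the $P_K$-side, and then to transport the affine Deligne-Lusztig condition through it.

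\emph{Step 1: reduce the defining condition on the $P_J$-side to a condition on the $P_K$-side.} For $g\in G(L)$ with $g\i b\s(g)\in P_J\cdot_\s IwI$, I want to find $p\in P_J$ such that $(gp)\i b\s(gp)\in P_K w P_{\s(K)}$. Since $P_J\cdot_\s IwI=\bigsqcup_{x} P_J\cdot_\s IxI$ where $x$ ranges over ${}^J\tW\cap W_J w W_{\s(J)}$ — but actually by §\ref{py}(3) and the definition of $K$, the only $P_J$-stable piece appearing that is relevant is the one for $w$ itself, and $P_J\cdot_\s IwI=P_J\cdot_\s I W_K w I$ — I would argue exactly as in the proof of Proposition \ref{decomp-flag}: using \cite[Lemma 3.2 \& Prop.~4.5]{He-GeometryOfADLV} (or rather the $P_J$-analogue from \cite[§2]{He-aff}), the element $g\i b\s(g)$ can be moved by right multiplication by $P_J$ into $P_K w P_{\s(K)}$. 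Conversely any $gP_K$ with $g\i b\s(g)\in P_K w P_{\s(K)}$ satisfies $g\i b\s(g)\in P_J\cdot_\s IwI$ because $P_K w P_{\s(K)}\subseteq P_J\cdot_\s IwI$ (again §\ref{py}). So the map $\pi_{J,K}$ sends $\{gP_K;\ g\i b\s(g)\in P_K w P_{\s(K)}\}$ onto $X_{J,w}(b)$.

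\emph{Step 2: injectivity of the induced map.} I must show $\pi_{J,K}$ is injective on that locus. Suppose $gP_K$ and $gpP_K$ both lie in it, $p\in P_J$. Then $w\i (g\i b\s(g)) \in P_{\s(K)}$ and conjugating, $p\i\cdot$(something in $P_K w P_{\s(K)})\cdot\s(p)\in P_K w P_{\s(K)}$. Reducing $P_K\backslash G(L)/P_{\s(K)}$ to the finite situation via $P_K/I\subset P_J/I$ being a finite flag variety inside a bigger one, this becomes a statement about the finite reductive quotient: the condition $\mathrm{Ad}(w)\s(K)=K$ forces the relevant twisted centralizer to be contained in $P_K$, so $p\in P_K$ and $gP_K=gpP_K$. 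This is the place where the hypothesis $K=I(J,w,\s)$ being \emph{maximal} with $\mathrm{Ad}(w)\s(K)=K$ is used essentially; the argument is the one underlying \cite[Prop.~2.5]{He-aff}. I expect \textbf{this injectivity step to be the main obstacle}, since it requires carefully tracking the Bruhat-type decomposition $P_K w P_{\s(K)} = \bigsqcup_{y\in W_K w W_{\s(K)}} IyI$ through the $\s$-twisted conjugation and identifying the stabilizer.

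\emph{Step 3: deduce (2).} Once (1) is established, in the function field case both sides are (locally closed) subschemes of partial affine flag varieties and the isomorphism of part (1) is an isomorphism of schemes, so $\dim X_{J,w}(b)=\dim\{gP_K;\ g\i b\s(g)\in P_K w P_{\s(K)}\}$. It remains to compare the latter with $\dim X_w(b)$. For this I would use that the projection $X_w(b)\to X_{J,w}(b)$, or rather $X_w(b)\to \{gP_K;\cdots\}$, is — after the reductions above — a fiber bundle whose fibers are the preimages, and over the relevant stratum these fibers are isomorphic to a classical Deligne–Lusztig variety in $P_K/I$ of dimension $\ell(w)-\ell_K(w)$ where $\ell_K$ is the length in the quotient; combined with the Deligne–Lusztig variety on the base (cf.\ Proposition \ref{decomp-flag2}) the dimensions add up to $\dim X_w(b)$. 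Actually the cleanest route is: $X_w(b)$ maps to $X_{J,w}(b)\cong\{gP_K;\cdots\}$ with fibers isomorphic to $Y(w')$-type Deligne–Lusztig varieties inside $P_K/I$, and a dimension count using $I(J,w,\s)$ being $\mathrm{Ad}(w)\s$-stable shows the fiber dimension is exactly $\dim X_w(b)-\dim X_{J,w}(b)$, but one must check the generic fiber is non-empty and equidimensional; here one invokes Proposition \ref{decomp-flag} applied inside $P_K$ together with $\mathrm{Ad}(w)\s$-stability of $\supp_\s$ relative to $K$. I would organize Step 3 as a short lemma on the fibers of $\pi_J$ restricted to a single $P_J$-stable piece, then quote it.
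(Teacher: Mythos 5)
Your target statement \textemdash{} that with $K=I(J,w,\s)$ the natural projection $G(L)/P_K\to G(L)/P_J$ restricts to a bijection from $\{gP_K;\ g\i b\s(g)\in P_KwP_{\s(K)}\}$ onto $X_{J,w}(b)$ \textemdash{} is indeed what the paper proves, and your Step 1 is essentially fine (surjectivity is immediate from the definition of $P_J\cdot_\s IwI$, and $P_KwP_{\s(K)}=P_K\cdot_\s IwI\subseteq P_J\cdot_\s IwI$ by Lang's theorem and \S\ref{py}(3)). The genuine gap is Step 2. The claim that $\mathrm{Ad}(w)\s(K)=K$ together with maximality of $K$ ``forces the relevant twisted centralizer to be contained in $P_K$'' is precisely the statement to be proved, and your sketch supplies no mechanism for it; \cite[Prop.~2.5]{He-aff} gives the disjointness $G(L)=\sqcup_{x\in{}^J\tW}P_J\cdot_\s IxI$, which is strictly weaker than the fiber statement you need (that $z\in P_KwP_{\s(K)}$, $p\in P_J$ and $p\i z\s(p)\in P_KwP_{\s(K)}$ force $p\in P_K$). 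The paper closes exactly this point by a different device: Lusztig's iteration $P^0=P$, $P^n=(P^{n-1})^{(b\s(P^{n-1})b\i)}$, B\'edard's parametrization of ${}^JW_a$ by sequences $(J_n,w_n)$, and the identity $I(J,w\t,\s)=J_m$ for $m\gg 0$ (\cite[Lemma 1.4]{He-gstable}); the map $P\mapsto P^m$ is then a canonical inverse of the projection $\cp_{\,K,w\t}(b)\to\cp_{J,w\t}(b)$, which is what injectivity amounts to (plus the bookkeeping over the components $\t'G(L)'$, since $\cp_J\cong G(L)'/P_J$ rather than $G(L)/P_J$). Without this construction, or an equivalent induction, part (1) is not established.

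Step 3 also goes wrong in substance. The fibers of $X_w(b)\to\{gP_K;\ \dots\}$ are not classical Deligne--Lusztig varieties of dimension $\ell(w)-\ell_K(w)$: over a point of the target the fiber is $\{pI\in \bar P_K/\bar I;\ (\mathrm{Ad}(w\t)\circ\s)(pI)=pI\}$, the fixed-point set of a twisted Frobenius on the finite-dimensional flag variety of the reductive quotient of $P_K$, hence a finite (i.e.\ $0$-dimensional) set. That finiteness, together with the surjectivity of $\pi_J$ onto $X_{J,w}(b)$ and part (1), immediately yields $\dim X_{J,w}(b)=\dim X_w(b)$; no dimension count involving Deligne--Lusztig varieties on the base is needed. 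Moreover, your appeal to Propositions \ref{decomp-flag} and \ref{decomp-flag2} is not available here: those require $b=\t$ basic and $W_{\supp_\s(w)}$ (resp.\ $W_{\supp_\s(w)\cup J}$) finite, whereas Theorem \ref{fineADLV}(2) concerns arbitrary $b$ and arbitrary $w\in{}^J\tW$.
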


\begin{cor}\label{4.1.3}
If $J \subset \tSS$ and $w \in {}^J \tW$ is a $\s$-Coxeter element in the finite Weyl group $W_{\supp_\s(w)}$, then
\[
X_{J, w}(\t) \cong \sqcup_{i \in \JJ_\t/(\JJ_\t \cap P_{\supp_\s(w) \cup I(J, w, \s)}}) i Y_{I(J, w, \s)}(w),
\]
where $Y_{I(J, w, \s)}(w) = \{ gP_{I(J, w, \s)} \in P_{\supp_\s(w)\cup I(J, w, \s)}/P_{I(J, w, \s)};\ g\i\t\s(g)\in P_{I(J, w, \s)}wP_{\s(I(J, w, \s))}  \}$.

Furthermore, the projection $\pi_J$ induces an isomorphism from the classical Deligne-Lusztig variety $\{g I \in P_{\supp_\s(w)}/I; g \i \t \s(g) \in I w I\}$ in the flag variety $P_{\supp_\s(w)}/I$ to
\begin{align*}
& \{ gP_{\supp_\s(w)\cap J} \in P_{\supp_\s(w)}/P_{\supp_\s(w)\cap J}; g\i\t\s(g)\in P_{\supp_\s(w)\cap J}\,\cdot_\s IwI
\} \\ & \cong   Y_{I(J, w, \s)}(w),
\end{align*}
and $Y_{I(J, w, \s)}(w)$ has dimension $\ell(w)$.
\end{cor}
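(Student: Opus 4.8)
The plan is to deduce Corollary~\ref{4.1.3} from Theorem~\ref{fineADLV}, Proposition~\ref{decomp-flag}, Proposition~\ref{decomp-flag2} and the basic properties of $\s$-Coxeter elements recorded in \S\ref{py} and \S\ref{4.3}. First I would record the key structural fact about a $\s$-Coxeter element $w$ of $W_{\supp_\s(w)}$: because in a reduced expression each $\t\s$-orbit of simple reflections contributes exactly one factor, we have $\supp_\s(w) = \supp(w')\cup(\text{orbit data})$ and, more importantly, $W_{\supp_\s(w)}$ is finite by hypothesis, so all the parahoric subgroups appearing are genuine (finite type modulo $I$). I would also observe that $I(J,w,\s) = \max\{K\subseteq J;\ \mathrm{Ad}(w)\s(K)=K\}$ is contained in $\supp_\s(w)\cup J$ and satisfies $\mathrm{Ad}(w)\s(I(J,w,\s))=I(J,w,\s)$, so Proposition~\ref{decomp-flag2} applies with $J$ replaced by $I(J,w,\s)$, provided $W_{\supp_\s(w)\cup I(J,w,\s)}$ is finite; the latter follows since $I(J,w,\s)\subseteq J$ and one checks $\supp_\s(w)\cup I(J,w,\s)\subseteq \supp_\s(w)\cup J$, which need not a priori be finite, so here I would instead argue directly that the relevant Weyl group is $W_{\supp_\s(w)\cup I(J,w,\s)}$ and that this equals a subgroup of $W_{\supp_\s(w)}\cdot W_{J\cap\supp_\s(w)}$-type finiteness — this is the point that needs care.

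Granting that, the first displayed isomorphism is immediate: by Theorem~\ref{fineADLV}(1) we have $X_{J,w}(\t)\cong \{gP_{I(J,w,\s)};\ g\i\t\s(g)\in P_{I(J,w,\s)}wP_{\s(I(J,w,\s))}\}$, and then Proposition~\ref{decomp-flag2}, applied with $J$ there being our $I(J,w,\s)$ and using $\mathrm{Ad}(w)\s(I(J,w,\s))=I(J,w,\s)$, decomposes this coarse affine Deligne-Lusztig variety into the disjoint union $\sqcup_i\, i\,Y_{I(J,w,\s)}(w)$ over $i\in\JJ_\t/(\JJ_\t\cap P_{\supp_\s(w)\cup I(J,w,\s)})$, where $Y_{I(J,w,\s)}(w)$ is exactly the classical Deligne-Lusztig variety in $P_{\supp_\s(w)\cup I(J,w,\s)}/P_{I(J,w,\s)}$ named in the statement.

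For the "furthermore" part I would compare three presentations of what is, up to isomorphism, one classical Deligne-Lusztig variety. Start from $Y(w)=\{gI\in P_{\supp_\s(w)}/I;\ g\i\t\s(g)\in IwI\}$, the Deligne-Lusztig variety of Proposition~\ref{decomp-flag}. Projecting along $\pi_{\supp_\s(w)\cap J}$ I claim the image is $\{gP_{\supp_\s(w)\cap J};\ g\i\t\s(g)\in P_{\supp_\s(w)\cap J}\cdot_\s IwI\}$ and that $\pi$ is injective on $Y(w)$: injectivity is the usual fact that for a $\s$-Coxeter element the Deligne-Lusztig variety in the full flag variety maps isomorphically to its image in the relevant partial flag variety (equivalently, $w$ is the minimal element in $W_{\supp_\s(w)\cap J}\cdot_\s w$ and the fibers of $\pi$ meet $IwI$ in a single $I$-orbit because no simple reflection from $\supp_\s(w)\cap J$ occurs "twice"), which I would justify via the length/support properties of $\s$-Coxeter elements together with \S\ref{py}(3). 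Then identifying $P_{\supp_\s(w)\cap J}\cdot_\s IwI$ inside $P_{\supp_\s(w)}$ with (the restriction of) $P_{I(J,w,\s)}wP_{\s(I(J,w,\s))}$ — using that $\supp_\s(w)\cap J$ and $I(J,w,\s)$ have the same parahoric closure relative to $w$, via \S\ref{py}(3) applied to $I(J,w,\s)=I(\supp_\s(w)\cap J, w,\s)$ — gives the stated isomorphism with $Y_{I(J,w,\s)}(w)$. Finally $\dim Y_{I(J,w,\s)}(w)=\dim Y(w)=\ell(w)$ because $Y(w)$ is a classical Deligne-Lusztig variety attached to a Coxeter element, whose dimension equals the length of that element (equivalently the rank $=\#(\supp_\s(w)/\langle\t\s\rangle)=\ell(w)$), and $\pi_J$ restricted to it is an isomorphism.

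I expect the main obstacle to be the bookkeeping around $I(J,w,\s)$ versus $\supp_\s(w)\cap J$: one must check that these two subsets of $\tSS$ determine the same parahoric-double-coset data for $w$ (so that the coarse ADLV for $I(J,w,\s)$ and the $P_{\supp_\s(w)\cap J}$-stable piece for $w$ really coincide after restriction to $P_{\supp_\s(w)}$), and that all the ambient Weyl groups $W_{\supp_\s(w)\cup I(J,w,\s)}$ are finite. Both are formal consequences of the $\s$-Coxeter hypothesis and Proposition~\ref{min}/\S\ref{py}, but assembling them cleanly — and in particular verifying the injectivity of $\pi_J$ on $Y(w)$ rather than merely surjectivity onto its image — is where the real content lies; everything else is a direct citation of Theorem~\ref{fineADLV}, Proposition~\ref{decomp-flag} and Proposition~\ref{decomp-flag2}.
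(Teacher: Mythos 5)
Your first displayed isomorphism is argued exactly as in the paper (Theorem \ref{fineADLV}(1) followed by Proposition \ref{decomp-flag2}), but the point you yourself flag as "the point that needs care" is precisely where your proposal has a genuine gap, and your proposed patch for the second half is wrong. The single fact the paper's proof turns on, and which you never isolate, is that because $w$ is a $\s$-Coxeter element of $W_{\supp_\s(w)}$ and $w\in{}^J\tW$, the set $I(J,w,\s)$ consists exactly of those $j\in J$ such that $s_j$ commutes with $s_i$ for all $i\in\supp_\s(w)$; in particular $I(J,w,\s)$ and $\supp_\s(w)$ are disconnected in the affine Dynkin diagram. This one observation does all the work you are missing: it gives $W_{\supp_\s(w)\cup I(J,w,\s)}\cong W_{\supp_\s(w)}\times W_{I(J,w,\s)}$, hence finiteness (so Proposition \ref{decomp-flag2} really applies — your suggestion of a "$W_{\supp_\s(w)}\cdot W_{J\cap\supp_\s(w)}$-type finiteness" argument is not a proof), and it identifies $Y_{I(J,w,\s)}(w)\subset P_{\supp_\s(w)\cup I(J,w,\s)}/P_{I(J,w,\s)}$ with the classical Deligne--Lusztig variety $\{gI\in P_{\supp_\s(w)}/I;\ g\i\t\s(g)\in IwI\}$, since $P_{\supp_\s(w)}\cap P_{I(J,w,\s)}=I$ and the $I(J,w,\s)$-directions split off.

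Your treatment of the "furthermore" part also rests on the identity $I(J,w,\s)=I(\supp_\s(w)\cap J,w,\s)$, which is false in general: the right-hand side is by definition contained in $\supp_\s(w)$, while by the commuting description the left-hand side is disjoint from $\supp_\s(w)$ and is typically nonempty (in the situation of Section \ref{7} one has $I(J,w_\Sigma,\s)=\Sigma^\sharp$ and $\supp_\s(w_\Sigma)=\Sigma^\flat$, two disjoint sets). What is true, and what actually yields the injectivity of $\pi_J$ on $\{gI\in P_{\supp_\s(w)}/I;\ g\i\t\s(g)\in IwI\}$, is that $I(\supp_\s(w)\cap J,w,\s)\subseteq I(J,w,\s)\cap\supp_\s(w)=\emptyset$, so the fibers of the projection onto the fine stratum in $P_{\supp_\s(w)}/P_{\supp_\s(w)\cap J}$ (described in the proof of Theorem \ref{fineADLV}, via the maps $P\mapsto P^n$ and Lang's theorem) are single points; the "usual fact" about Coxeter elements that you invoke is not a citable statement in this generality and in any case needs exactly this computation of $I(\supp_\s(w)\cap J,w,\s)$. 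Note also that $\pi_J$ on the whole of $X_w(\t)$ is \emph{not} injective when $I(J,w,\s)\ne\emptyset$ (several $\JJ_\t$-translates of $Y(w)$ collapse onto one copy of $Y_{I(J,w,\s)}(w)$, matching the change of index set from $\JJ_\t\cap P_{\supp_\s(w)}$ to $\JJ_\t\cap P_{\supp_\s(w)\cup I(J,w,\s)}$), so any argument must be made on the single piece $Y(w)$ and must use the vanishing of $I(\supp_\s(w)\cap J,w,\s)$; once that is in place, the dimension statement $\dim Y_{I(J,w,\s)}(w)=\ell(w)$ follows as you say.
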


\begin{proof}
The first part of the corollary follows from Theorem \ref{fineADLV} and Proposition \ref{decomp-flag2}.

Furthermore, it is easy to see that $I(J, w, \s)$ consists of $j \in J$ such that $s_j \text{ commutes with } s_i \text{ for all } i \in \supp_\s(w)$. In particular, $I(J, w, \s)$ and $\supp_\s(w)$ are disconnected in the affine Dynkin diagram. This implies that $\{g I \in P_{\supp_\s(w)}/I; g \i \t \s(g) \in I w I\}$ is isomorphic to $Y_{I(J, w, \s)}$. The proof of Theorem~\ref{fineADLV}~(1) implies that $\pi_J$ restricts to an isomorphism on $\{g I \in P_{\supp_\s(w)}/I; g \i \t \s(g) \in I w I\}$, and that the image of the latter variety can be identified with $Y_{I(J, w, \s)}(w)$.
\end{proof}

We follow the approach of \cite{Lu-par} and \cite{He-fineDL} for classical fine Deligne-Lusztig varieties in the partial flag variety. 

\subsection{The parahoric subgroup $P^Q$} For any $g \in G(L)$ and $H \subset G(L)$, we simply write ${}^g H$ for $g H g \i$. Let $G(L)'$ be the subgroup generated by all parahoric subgroups of $G(L)$. For any $J \subset \tSS$, let $\cp_J=\{{}^g P_J; g \in G(L)'\} \cong G(L)'/P_J$ be the set of parahoric subgroups conjugate to $P_J$ by an element of $G(L)'$. For any $J, K \subset \tSS$, $P \in \cp_J$ and $Q \in \cp_K$, we write $\pos(P, Q)=w$ if $w \in {}^J W_a^K$ and there exists $g \in G(L)'$ such that $P=g P_J g \i$ and $Q=g \dot{w} P_K \dot{w} \i g \i$, where $\dot{w}$ is a representative of $w$ in $G(L)'$. 

For any parahoric subgroups $P$ and $Q$, we set $P^Q=(P \cap Q) U_P$, where $U_P$ is the pro-unipotent radical of $P$. By \cite{Lu}, \S 1.1 (see also \cite{He-fineDL}, Lemma 2.3), one shows that $P^Q$ is again a parahoric subgroup. For any $g \in G(L)$, $$({}^g P)^{({}^g Q)}={}^g (P^Q).$$ 


\subsection{B\'edard's description of ${}^J W_a$} For $J \subset \tSS$, let $\ct(J, \t \s)$ be the set of sequences $(J_n, w_n)_{n \ge 0}$ such that

(a) $J_0=J$,

(b) $J_n=J_{n-1} \cap w_{n-1}  \t \s(J_{n-1}) w_{n-1} \i$ for $n \ge 1$,

(c) $w_n \in {}^{J_n} W_a\,^{ \t \s(J_n)}$ for $n \ge 0$,

(d) $w_n \in W_{J_n} w_{n-1} W_{ \t \s(J_{n-1})}$ for $n \ge 1$.

Then for any sequence $(J_n, w_n)_{n \ge 0} \in \ct(J,  \t \s)$, we have that $w_n=w_{n+1}=\cdots$ and $J_n=J_{n+1}=\cdots$ for $n \gg 0$. By \cite{Be}, the assignment $(J_n, w_n)_{n \ge 0} \mapsto w_m$ for $m \gg 0$ defines a bijection $\ct(J,  \t \s) \to {}^J W_a$.

\subsection{Lusztig's partition of $\cp_J$}\label{8} Following \cite{Lu-par}, section 4,  we give a partition on $\cp_J$.

Let $b \in \t G(L)'$. To each $P \in \cp_J$, we associate a sequence $(P^n, J_n, w_n)_{n \ge 0}$ as follows
\begin{gather*} P^0=P, \quad P^n=(P^{n-1})^{(b \s(P^{n-1}) b \i)} \text{ for } n \ge 1,\\
J_n \subset I \text{ with } P^n \in \cp_{J_n}, \quad w_n=\pos(P^n, b \s(P^n) b \i) \qquad \text{ for } n \ge 0. \end{gather*}

By \cite[\S 1.4]{Lu}, $(J_n, w_n)_{n \ge 0} \in \ct(J, \t \s)$. For $w \in {}^J W_a$, let $$\cp_{J, w \t}(b)=\{P \in \cp_J; w_m=w \text{ for } m \gg 0\}.$$ Then $\cp_J=\sqcup_{w \in {}^J W_a} \cp_{J, w\t}(b)$.

\begin{prop}
Let $w \in {}^J W_a$. Then $$\cp_{J, w \t}(b)=\{{}^g P_J; g \in G(L)', g \i b \s(g) \in P_J \cdot_\s I w \t I\}.$$ 
\end{prop}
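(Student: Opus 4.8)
The statement identifies Lusztig's partition piece $\cp_{J,w\t}(b)$ with an affine-Deligne-Lusztig-type locus inside $\cp_J \cong G(L)'/P_J$. The plan is to translate both sides into statements about the sequence $(P^n,J_n,w_n)_{n\ge 0}$ attached to a parahoric $P\in\cp_J$ and to show that the stabilizing value $w_m$ ($m\gg 0$) equals $w$ precisely when $P={}^gP_J$ for some $g\in G(L)'$ with $g\i b\s(g)\in P_J\cdot_\s Iw\t I$. First I would fix $P={}^gP_J$ with $g\in G(L)'$ and unwind the definition: $\pos(P,b\s(P)b\i)$ depends only on the relative position, which by the conjugation formula ${}^g(P^Q)={}^g(P)^{({}^gQ)}$ and the identity $\pos({}^gP_J, {}^{gx}P_K)$ is computed from $g\i b\s(g)$. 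Concretely, writing $x=g\i b\s(g)$, the sequence $(J_n,w_n)$ attached to $P$ is the B\'edard sequence in $\ct(J,\t\s)$ determined by $x$, and by the bijection $\ct(J,\t\s)\to{}^JW_a$ of \S4.3, its stabilizing value $w_m$ is the unique element $v\in{}^JW_a$ with $x\in P_J\cdot_\s Iv\t I$ — here one invokes the decomposition $G(L)=\sqcup_{v\in{}^JW_a}P_J\cdot_\s Iv\t I$ (the $\t$-twisted version of the $P_J$-stable piece decomposition in \S\ref{py}), noting $x\in\t G(L)'$ since $b\in\t G(L)'$.

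The key technical step is therefore: for $P={}^gP_J$ and $b\s(P)b\i={}^{bx\s(g)...}$, I must check that the recursively defined $(P^n,J_n,w_n)$ really is the B\'edard sequence of $x=g\i b\s(g)$ relative to $(J,\t\s)$, and that $\pos(P^n,b\s(P^n)b\i)$ records $w_n$ correctly. This follows from \cite[\S1.4]{Lu} (already cited in \S\ref{8}), so the new content is mainly bookkeeping: one conjugates everything by $g\i$ to reduce to $P=P_J$, $b$ replaced by $x$, and then the formation of $P^n=(P^{n-1})^{(x\s(P^{n-1})x\i)}$ matches B\'edard's rule (b) because taking $U_P$-parts corresponds exactly to the intersection $J_n=J_{n-1}\cap w_{n-1}\t\s(J_{n-1})w_{n-1}\i$, and $\pos$ of the resulting pair gives rule (d)'s Weyl-group double coset representative. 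Hence $w_m=w$ for $m\gg 0$ iff $x$ lies in the stable piece $P_J\cdot_\s Iw\t I$, which is the claim.

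For the reverse inclusion there is essentially nothing extra: every $P\in\cp_J$ is of the form ${}^gP_J$ for some $g\in G(L)'$ (by definition of $\cp_J$), so the argument above applied to that $g$ shows $P\in\cp_{J,w\t}(b)$ iff $g\i b\s(g)\in P_J\cdot_\s Iw\t I$, giving both inclusions simultaneously. One subtlety to address: the element $g$ witnessing $P={}^gP_J$ is only well-defined up to right multiplication by $N_{G(L)'}(P_J)$, so I should remark that the condition "$g\i b\s(g)\in P_J\cdot_\s Iw\t I$" is independent of this choice — this is immediate because $P_J\cdot_\s Iw\t I$ is stable under $p\mapsto p'\i\,p\,\s(p')$ for $p'\in P_J$ and $N_{G(L)'}(P_J)$ normalizes $P_J$ while commuting appropriately with the $\t$-twist; alternatively it follows formally since the left-hand side $\cp_{J,w\t}(b)$ is manifestly well-defined.

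\textbf{Main obstacle.} The only real point requiring care is verifying that the parahoric recursion $P^n=(P^{n-1})^{(b\s(P^{n-1})b\i)}$ reproduces B\'edard's combinatorial recursion under the dictionary $P\leftrightarrow g\i b\s(g)$; everything else is a direct application of the $P_J$-stable piece decomposition and the B\'edard bijection already recalled in the text. I expect this to be short given that \cite[\S1.4]{Lu} is cited precisely for this purpose, so the proof in the paper is likely a one-paragraph reduction to loc.\ cit.\ combined with the decomposition $G(L)=\sqcup_{w\in{}^JW_a}P_J\cdot_\s Iw\t I$.
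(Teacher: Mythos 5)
Your proposal follows essentially the same route as the paper: write $P={}^gP_J$, use the stable-piece decomposition $\t G(L)'=\sqcup_{w\in{}^JW_a}P_J\cdot_\s Iw\t I$ applied to $x=g\i b\s(g)$, and show that Lusztig's recursion $(P^n,J_n,w_n)$ stabilizes at the label of the stable piece containing $x$ — the paper handles this last point by an argument "similar to \cite[Lemma 2.4]{He-fineDL}", which is exactly the step you isolate as the main obstacle and sketch via the B\'edard/\cite[\S 1.4]{Lu} dictionary. The argument is correct and at the same level of detail as the paper's own proof.
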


\begin{proof}
Notice that $G(L)' \cdot_\s b \subset \t G(L)'=\sqcup_{w \in {}^J W_a} P_J \cdot_\s I w \t I$. Then any $P \in \cp_J$ is of the form $P={}^g P_J$ for some $g \in G(L)$ with $g \i b \s(g) \in I w \t I$ for a unique $w \in {}^J W_a$. Let 
$(P^n, J_n, w_n)_{n \ge 0}$ be the sequence associated to ${}^g P_J$. Similar to \cite[Lemma 2.4]{He-fineDL}, $w_m=w$ for $m \gg 0$. Hence $P \in \cp_{J, w \t}(b)$. 
\end{proof}

\subsection{Part (1) of Theorem \ref{fineADLV}}\label{sec:4.5}
Let $p_J: \cp_\emptyset \to \cp_J$ be the projection map. Similarly to \cite[\S 4.2 (c) \& (d)]{Lu-par}, for any $n \ge 0$, the map $P \mapsto P^n$ gives an isomorphism $\vartheta_n: \cp_{J, w \t}(b) \to \cp_{J_n, w \t}(b)$ and the inverse map is $p_J$. In particular, $p_{J_n}=\vartheta_n \circ p_J$. 

By \cite[Lemma 1.4]{He-gstable}, $I(J, w \t, \s)=J_m$ for $m \gg 0$. By Lang's theorem, $P_{I(J, w \t, \s)} \cdot_\s I w \t I=P_{I(J, w \t, \s)} w \t P_{\s(I(J, w \t, \s))}$. Therefore $\cp_{J, w \t}(b)$ is isomorphic to \[\cp_{I(J, w \t, \s), w \t}(b)=\{{}^g P_{I(J, w \t, \s)} \in \cp_{I(J, w \t, \s)}; g \i b \s(g) \in P_{I(J, w \t, \s)} w \t P_{\s(I(J, w \t, \s))}\}.\] 

We fix $\t' \in \Omega$. Then 
\begin{align*} 
& X_{J, w \t}(b) \cap \t' G(L)'/P_J=\{g \t' P_J; g \in G(L)', (g \t') \i b \s(g \t') \in P_J \cdot_\s I w \t I\} \\ &=\{g P_{\t'(J)} \t'; g \in G(L)', g \i b \s(g) \in P_{\t'(J)} \cdot_\s I \t' w \t \s(\t') \i I\} \\ &\cong \{g P_{K} \t'; g \in G(L)', g \i b \s(g) \in P_{K} \t' w \t \s(\t') \i P_{\s(K)}\}
\\ &=\{g \t' P_{(\t') \i(K)}; g \in G(L)', g \i b \s(g) \in P_{(\t') \i(K)} w \t P_{\s(\t') \i(K)}\}  \\ &=X_{I(J, w \t, \s), w \t}(b) \cap \t' G(L)'/P_{I(J, w \t, \s)},
\end{align*}
here $K=I(\t'(J), \t' w \t \s(\t') \i, \s)=\t'(I(J, w \t, \s))$. Part (1) of the Theorem \ref{fineADLV} follows by combining all such $\t'$'s together. 

\subsection{Part (2) of Theorem \ref{fineADLV}}\label{sec:4.6}
In this subsection, we assume that $F=\FF_q((\e))$. Suppose that $w_m=w$.  Let $\bar P_{I(J, w \t, \s)}$ be the reductive quotient of $P_{I(J, w \t, \s)}$ and $\bar I$ the image of $I$ in $\bar P_{I(J, w \t, \s)}$. The fiber of the map $p_{I(J, w \t, \s)}: \cp_{\emptyset, w \t}(b) \to \cp_{I(J, w \t, \s), w \t}(b)$ is isomorphic to $\{p I \in P_{I(J, w \t, \s)}/I; p \i w \t \s(p) \in I w \t I\}=\{p I \in \bar P_{I(J, w \t, \s)}/ \bar I; (\text{Ad}(w \t) \circ \s)(p I)=p I\}$. Here $\text{Ad}(w \t) \circ \s$ is a twisted Frobenius morphism on $P_{I(J, w \t, \s)}/I \cong \bar P_{I(J, w \t, \s)}/\bar I$. In particular, the fiber of $\pi_{I(J, w \t, \s)}$ is $0$-dimensional. Since $\th_m$ is an isomorphism and $p_{I(J, w \t, \s)}=\th_m \circ p_J$, the fiber of $p_J$ is also $0$-dimensional. Hence $\dim X_{J, w \t}(b)=\dim X_{w \t}(b)$. 

\section{Newton strata and Ekedahl-Oort strata}

\subsection{Ekedahl-Oort strata}\label{setup} From now on, we assume that $G$ is absolutely quasi-simple and $J$ is a maximal proper subset of $\tSS$ and that $\s(J)=J$. Therefore $P_J(F)$ is a rational maximal parahoric subgroup\footnote{This assumption excludes some cases to which our method can be applied, for instance the Hilbert-Blumenthal case. In fact, one may consider maximal rational parahoric subgroups instead. However, in some cases, the only maximal rational parahoric subgroups are just rational Iwahori subgroups. Then the resulting stratification is the Kottwitz-Rapoport stratification. It is much harder to achieve a complete classification under this weaker assumption, so we do not consider it here.} of $G(F)$. 

We simply write $\EO^J(\mu)$ for $\Adm^J(\mu) \cap {}^J \tW$. The elements in $\EO^J(\mu)$ are called {\it EO elements}. Here EO stands for Ekedahl-Oort. For any $w \in \EO^J(\mu)$, we call $Z_{J, w}$ an {\it Ekedahl-Oort stratum}. 

Let $\EO^J_{\s, \cox}(\mu)$ be the subset of $\EO^J(\mu)$ consists of elements $w$, where $\supp_\s(w)$ is a proper subset of $\tSS$ and $w$ is a $\s$-Coxeter element of $W_{\supp_\s(w)}$. Since $G$ is absolutely quasi-simple, the affine Dynkin diagram of $\tW$ is connected. Hence if $w \in \EO^J_{\s, \cox}(\mu)$, then $W_{\supp_\s(w)}$ is a finite Weyl group. 

It follows from Viehmann's paper \cite{Vi}, Theorem 1.1, that this notion coincides with the usual notion of Ekedahl-Oort strata, if $G$ is unramified.

We are mainly interested in the case where the basic locus is the union
\begin{equation}\label{decomp_basic}
Z_{J, \co_0}=\sqcup_{w \in \EO^J_{\s, \cox}(\mu)} Z_{J, w}.
\end{equation}
In other words, the basic locus is a union of Ekedahl-Oort strata and each Ekedahl-Oort stratum is (``in a natural way'') the union of classical Deligne-Lusztig varieties attached to a Coxeter element. If~\eqref{decomp_basic} holds, then we say that $(G, \mu, J)$ is {\it of Coxeter type}. The first main result of this paper is the classification theorem. 

\begin{thm}\label{classification}
Let $(G, \mu, J)$ be as in $\S$\ref{setup}. The following 21 cases is the complete list (up to isomorphisms) of the triples $(G, \mu, J)$ of Coxeter type:\footnote{Here we use the names in \cite[Section 4]{Tits}.} 

\begin{tabular}{| c | c | c | }
\hline
  $(A_n, \o^\vee_1, \SS)$  & $(B_n, \o^\vee_1, \SS)$ & $(B_n, \o^\vee_1, \tSS-\{n\})$ \\
 \hline
  $(B$-$C_n, \o^\vee_1, \SS)$ & $(B$-$C_n, \o^\vee_1, \tSS-\{n\})$ & $(C$-$B_n, \o^\vee_1, \SS)$ \\
   \hline
 $(C$-$BC_n, \o^\vee_1, \SS)$ &    $(C$-$BC_n, \o^\vee_1, \tSS-\{n\})$ & $(D_n, \o^\vee_1, \SS)$  \\
   \hline
$({}^2 A'_n, \o^\vee_1, \SS)$  &   $({}^2 B_n, \o^\vee_1, \tSS-\{n\})$ & $({}^2 B$-$C_n, \o^\vee_1, \tSS-\{n\})$ \\
   \hline
$({}^2 D_n, \o^\vee_1, \SS)$ &     $(A_3, \o^\vee_2, \SS)$ & $({}^2 A'_3, \o^\vee_2, \SS)$ \\
   \hline
     $(C_2, \o^\vee_2, \SS)$  & $(C_2, \o^\vee_2, \tSS-\{1\})$ & $({}^2 C_2, \o^\vee_2, \tSS-\{1\})$ \\
   \hline
$({}^2 C$-$B_2, \o^\vee_1, \tSS-\{1\})$ && \\
   \hline
\end{tabular}
\end{thm}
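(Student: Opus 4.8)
The strategy is to reduce the classification to a finite, completely combinatorial problem on affine Dynkin diagrams together with the action of $\t\s$, and then carry out a case-by-case verification. First I would translate the defining condition \eqref{decomp_basic} into an intrinsic statement about $\Adm(\mu)$ and the straight $\s$-conjugacy classes in $\tW$. Recall from $\S$\ref{2}--$\S$\ref{4.3} that the basic locus is covered by the EO strata $Z_{J,w}$ for $w\in\EO^J(\mu)$, and by Theorem~\ref{straightADLV}, Proposition~\ref{decomp-flag} and Corollary~\ref{4.1.3}, a given EO stratum $Z_{J,w}$ is entirely basic and of Deligne-Lusztig--Coxeter form precisely when the minimal-length element in its $\s$-conjugacy class (equivalently, by Proposition~\ref{min}, the element $w$ itself, reduced via $\to_{J,\s}$) is $\s$-straight, basic, and a $\s$-Coxeter element of $W_{\supp_\s(w)}$ with $\supp_\s(w)\subsetneq\tSS$. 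So the condition ``$(G,\mu,J)$ is of Coxeter type'' becomes: every $w\in\EO^J(\mu)$ is, up to $\to_{J,\s}$-equivalence and cyclic shift, a $\s$-Coxeter element in a proper $\t\s$-stable parabolic subsystem; equivalently, every non-basic $\s$-conjugacy class in $B(G,\mu)_J$ fails to meet $\Adm^J(\mu)\cap{}^J\tW$ via an EO element, i.e.\ all EO elements are basic.

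\textbf{Key steps.} Step 1: Fix the combinatorial datum. Since $G$ is absolutely quasi-simple and tamely ramified, the pair $(\tW,\tSS)$ with its $\s$-action and the element $\t\in\Omega$ (determined by $\mu$) is one of a known finite list; $\mu$ minuscule further restricts $\t$ to be in the ``minuscule'' part of $\Omega$, and $J$ is a $\s$-stable maximal proper subset of $\tSS$. I would enumerate all triples $(\tW\text{-type},\ \mu\text{ minuscule},\ J)$ up to the evident symmetries. Step 2: For each candidate, compute $\Adm(\mu)$ (using the characterization $w\le t^{x(\l)}$) and hence $\EO^J(\mu)=\Adm^J(\mu)\cap{}^J\tW$; this is a finite Bruhat-order computation. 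Step 3: For each $w\in\EO^J(\mu)$, compute $\supp_\s(w)$ and decide whether $w$ is $\to_{J,\s}$-equivalent (after applying Proposition~\ref{min}) to a $\s$-Coxeter element of the parabolic $W_{\supp_\s(w)}$, and whether $\supp_\s(w)\subsetneq\tSS$. Step 4: Cross-check with the Newton stratification: $(G,\mu,J)$ is of Coxeter type iff \emph{all} EO elements turn out to be basic (equivalently, $\supp_\s(w)$ is always proper and the associated straight class is the one of $\t$), so I must verify that in each surviving case no EO element has full support or gives a non-basic Newton point, and conversely that in each discarded case at least one EO element is non-basic or of non-Coxeter type. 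The output of Steps 1--4 is exactly the 21-line table, organized by Tits's classification of tamely ramified absolutely quasi-simple groups.

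\textbf{Main obstacle.} The hard part is not any single computation but its uniform organization: there are infinitely many $(G,\mu,J)$ because of the rank parameter $n$, so I cannot literally enumerate them. I would handle this by treating the root datum of each Tits type in a rank-uniform way — writing $\Adm(\o_1^\vee)$ (the dominant minuscule case) and the longer $\o_2^\vee$ cases explicitly as sets of elements indexed by $n$, and showing that for $\o_1^\vee$ the EO elements are length-$\le 2$ (so their support is automatically small and $\s$-Coxeter) while for $\o_2^\vee$ the phenomenon only survives in small rank. The genuinely delicate points will be: (a) the twisted groups $({}^2A'_n,{}^2D_n,\dots)$, where the $\s$-action folds the diagram and one must track $\t\s$-orbits on $\tSS$ carefully to compute $\supp_\s$; (b) the non-reduced/special affine types ($B$-$C_n$, $C$-$BC_n$, etc.), where the local index has extra simple reflections and the identification of $\Adm(\mu)$ needs the precise echelonnage; and (c) ruling out all the ``near-miss'' cases — e.g.\ $\o_2^\vee$ in type $A_n$ for $n\ge4$, or $\o_1^\vee$ with a non-special maximal parahoric in larger rank — by exhibiting an explicit EO element of full support or an explicit non-basic EO element. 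These exclusions are what pin the list down to exactly 21 entries, and getting them all is the bulk of the work; I expect to borrow heavily from the structure theory of $\Adm(\mu)$ and from the known classification of when $\tW_a$ of a given type has only ``small'' straight classes hitting $\Adm(\mu)$.
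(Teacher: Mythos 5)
Your overall architecture (reduce to the quadruple $(\tW,\l,J,\s)$, then a rank-uniform case-by-case analysis with explicit witnesses in the excluded cases) is the same as the paper's, but the criterion you propose to verify in Steps 2--4 is not the right one, and this is a genuine gap. You translate ``Coxeter type'' into ``every $w\in\EO^J(\mu)$ is, up to equivalence, a $\s$-Coxeter element in a proper parabolic; equivalently, all EO elements are basic.'' Both halves are false for the cases in the list: in every listed case the set $\EO^J(\mu)-\EO^J_{\s,\cox}(\mu)$ is nonempty and its elements are non-basic (e.g.\ for $(A_n,\o_1^\vee,\SS)$ the elements $s_0s_{[n-1,i]}\t$ have Newton points $(\tfrac{1}{i-1}^{(i-1)},0^{(n-i+1)})$ and lengths up to $n$, so in particular your claim that EO elements are of length $\le 2$ for $\o_1^\vee$ also fails). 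Indeed, every Newton stratum --- not just the basic one --- is a union of EO strata in these cases, so non-basic EO elements must occur; your criterion would therefore reject every candidate and produce an empty list rather than the 21 entries.

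The correct translation, which is what makes the finite verification work, is the pair of conditions the paper isolates: \CC (every EO element with central Newton point is $\s$-Coxeter in $W_{\supp_\s(w)}$) and \CSC (every EO element that is \emph{not} $\s$-Coxeter is $\s$-straight). One has \CSC $\Rightarrow$ Coxeter type via Theorem~\ref{straightADLV} (a straight element's stratum meets only the $\s$-conjugacy class of that element, which is non-basic unless the length is $0$), together with the fact that a $\s$-Coxeter EO element with $W_{\supp_\s(w)}$ finite has its whole stratum inside the basic class (Proposition~\ref{decomp-flag}); Coxeter type trivially implies \CC; and the real content of the classification is the case-by-case proof that \CC implies \CSC, i.e.\ in each listed case one computes $\EO^J(\mu)$, checks that the non-Coxeter elements are $\s$-straight, and records their (non-basic) Newton points, while in each excluded case one exhibits an explicit EO element with central Newton point that is not $\s$-Coxeter (not merely ``a non-basic EO element,'' which proves nothing). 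Without replacing your criterion by this \CC/\CSC dichotomy, Steps 2--4 of your plan verify the wrong condition and the case analysis cannot recover the stated table.
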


The theorem is proved in section~\ref{6}.

\subsection{Ranked poset} Recall that a ranked poset is a partially ordered set (poset) equipped with a rank function $\rho$ such that whenever $y$ covers $x$, $\rho(y)=\rho(x)+1$. We say that the partial order of a poset is {\it almost linear} if the poset has a rank function $\rho$ such that for any $x, y$ in the poset, $x<y$ if and only if $\rho(x)<\rho(y)$. 

\begin{thm}\label{main}
Let $(G, \mu, J)$ be as in Theorem~\ref{classification}. Then 

\begin{enumerate}

\item Every Newton stratum is a union of Ekedahl-Oort strata. In other words, there is a map $\EO^J(\mu) \to B(G, \mu)_J$, $w \mapsto \co_w$ such that $Z_w \subset Z_{\co_w}$. 


\item The partial order of $B(G, \mu)_J$ (inherited from $B(G)$) is almost linear. 

\item The partial order $\le_{J, \s}$ of $\EO^J_{\s, \cox}(\mu)$ coincides with the usual Bruhat order and is almost linear. Here the rank is the length function. 

\item For any $w \in \EO^J(\mu)-\EO^J_{\s, \cox}(\mu)$ and $b' \in \co_w$ the $\s$-centralizer $\JJ_{b'}$ acts transitively on $X_{J,w}(b')$. In particular, whenever there is a notion of dimension (in the RZ cases and in the function field case), this implies that $\dim X_{J,w}(b') = 0$.

\item For any $w \in \EO^J_{\s, \cox}(\mu)$, set $J(w, \s)=I(J, w, \s) \cup \supp_\s(w)$. Then $$X_{J, w}(\t)=\sqcup_{i \in \JJ_\t/(\JJ_\t \cap P_{J(w, \s)})} i Y(w),$$ where
\begin{align*}
Y(w)=\{ & g P_{I(J, w, \s)} \in P_{J(w, \s)}/P_{I(J, w, \s)};\\
& g \i \t \s(g) \in P_{I(J, w, \s)} w P_{\s(I(J, w, \s)}\}. 
\end{align*}


\end{enumerate}
\end{thm}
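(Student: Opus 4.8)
\textbf{Proof proposal for Theorem~\ref{main}.}

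The plan is to reduce everything to the list in Theorem~\ref{classification} and then to the combinatorial/geometric results already assembled in Sections~\ref{2}--\ref{8}. By the definition of ``Coxeter type'' (equation \eqref{decomp_basic}), knowing the classification means we may argue case-by-case, though in practice most of the statements will follow uniformly once the right structural input is in place. First I would dispose of part (5): for $w\in\EO^J_{\s,\cox}(\mu)$ we have $\t\in\co_0$ basic and $W_{\supp_\s(w)}$ finite, so Corollary~\ref{4.1.3} applies directly. It gives the decomposition $X_{J,w}(\t)=\sqcup_{i\in\JJ_\t/(\JJ_\t\cap P_{J(w,\s)})} i\,Y_{I(J,w,\s)}(w)$ with $Y_{I(J,w,\s)}(w)$ a classical Deligne--Lusztig variety of dimension $\ell(w)$; one only has to note that $J(w,\s)=\supp_\s(w)\cup I(J,w,\s)$ and that $P_{J(w,\s)}$ is a proper (finite-type) parahoric since $\supp_\s(w)\subsetneq\tSS$ and $I(J,w,\s)$ is disconnected from it in the affine diagram. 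This also proves part of Theorem~B(1).

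For parts (1) and (4) the key is to analyze the elements $w\in\EO^J(\mu)\setminus\EO^J_{\s,\cox}(\mu)$ and show they all land in \emph{non-basic} Newton strata with zero-dimensional fibers. Here the input is Theorem~\ref{fineADLV}(1), which identifies $X_{J,w}(b)$ with a coarse affine Deligne--Lusztig variety for the parahoric $P_{I(J,w,\s)}$, together with Proposition~\ref{min} (B\'edard-type reduction) to put $w$ in the form $v w_1$ with $w_1\in{}^J\tW$, $v\in W_{I(J,w_1,\s)}$. I would argue that for $w\notin\EO^J_{\s,\cox}(\mu)$ the relevant straight conjugacy class is reached and Theorem~\ref{straightADLV} forces $X_w(b)\cong \JJ_w/(\JJ_w\cap I)$, i.e.\ $\JJ_b$-transitivity after passing to $X_{J,w}(b)$; checking that the image $\co_w\in B(G,\mu)_J$ is non-basic is then a finite computation on the list. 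The existence of the well-defined map $w\mapsto\co_w$ (part (1)) amounts to showing each Ekedahl--Oort stratum $Z_{J,w}$ meets exactly one Newton stratum, which follows because, by Proposition~\ref{decomp-flag2} and the case analysis, $X_{J,w}(b)\neq\emptyset$ for at most one $\s$-conjugacy class $b$ in $B(G,\mu)_J$.

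Parts (2) and (3) are the order-theoretic statements. For (3) I would use $\S$\ref{4.3}: on ${}^J\tW$ the relation $\le_{J,\s}$ refines the Bruhat order restricted appropriately, and $\ell$ is monotone for it; restricting to $\EO^J_{\s,\cox}(\mu)$, where every element is a product of distinct simple reflections (one per $\t\s$-orbit in $\supp_\s$), the length determines the element up to the order, so $\le_{J,\s}$ becomes almost linear and coincides with Bruhat order. For (2) the cleanest route is to transport (3) through the (now established) length-preserving, order-compatible correspondence between $\EO^J_{\s,\cox}(\mu)$ and the non-basic part of $B(G,\mu)_J$ via $w\mapsto\co_w$, using that $B(G,\mu)_J$ with its partial order inherited from $B(G)$ has a unique minimal (basic) and, on the list, a totally ordered chain of Newton points; the Newton-point inequality in $B(G)$ is controlled by $\langle\bar\nu_w,2\rho\rangle$, which on the list increases strictly with $\ell(w)$.

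The main obstacle I expect is precisely the case-by-case verification underlying parts (1), (2) and (4): one must, for each of the $21$ entries of Theorem~\ref{classification}, identify $B(G,\mu)_J$ explicitly, compute the Newton points, and check that every $w\in\EO^J(\mu)\setminus\EO^J_{\s,\cox}(\mu)$ is $\s$-straight (or reduces to a $\s$-straight element via Proposition~\ref{min}) lying over a non-basic class. This is where the hypothesis ``of Coxeter type'' is really used, and where a uniform argument seems out of reach; the affine-diagram combinatorics (which $J$, which minuscule $\mu$, which $\t\s$-action) must be run through for the ramified types $B\text{-}C_n$, $C\text{-}B_n$, $C\text{-}BC_n$ and their twisted forms, which behave less uniformly than the split simply-laced cases. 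Everything else is a formal consequence of Theorem~\ref{straightADLV}, Theorem~\ref{fineADLV}, Proposition~\ref{decomp-flag2}, Corollary~\ref{4.1.3} and the order facts in $\S$\ref{4.3}.
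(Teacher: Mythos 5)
Your overall strategy matches the paper's: part (5) is exactly Corollary~\ref{4.1.3} (i.e.\ Theorem~\ref{fineADLV} combined with Proposition~\ref{decomp-flag2}), part (4) is Theorem~\ref{straightADLV} once the Coxeter--Straight condition \CSC is known, and parts (1) and (3) rest, as in the paper, on the case-by-case verification in Section~\ref{6} that \CC implies \CSC together with the explicit lists of EO elements and Newton points; you rightly identify that case analysis as the real content, and your reduction of (3) to $\S$\ref{4.3}(1),(2) plus the length-comparability fact is the paper's argument (though ``the length determines the element up to the order'' is only true because, in the explicit lists, elements of $\EO^J_{\s,\cox}(\mu)$ of different lengths have nested supports --- this is part of what must be checked case by case).

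The genuine gap is in your argument for part (2). You propose to transport (3) through a ``length-preserving, order-compatible correspondence between $\EO^J_{\s,\cox}(\mu)$ and the non-basic part of $B(G,\mu)_J$ via $w\mapsto\co_w$''. No such correspondence exists: by the very definition of Coxeter type, \eqref{decomp_basic}, every $w\in\EO^J_{\s,\cox}(\mu)$ satisfies $\co_w=\co_0$, the basic class; it is the complementary set $\EO^J(\mu)\setminus\EO^J_{\s,\cox}(\mu)$ of $\s$-straight elements that maps onto the non-basic classes, and only onto, not bijectively (e.g.\ for $(\tilde B_n,\o^\vee_1,\tSS-\{n\},\id)$ the straight elements $s_{[n,2]}s_1\t$ and $s_{[n,2]}s_0\t$ have the same Newton point and the same image in $\Omega$, hence the same class). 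Your auxiliary claim that the Newton points on the list form ``a totally ordered chain'' is also false: for $(\tilde A_3,\o^\vee_2,\SS,\id)$ the non-basic Newton points include $\nu_{s_0s_1\t}=(\tfrac23,\tfrac23,\tfrac23,0)$ and $\nu_{s_0s_3\t}=(1,\tfrac13,\tfrac13,\tfrac13)$, which are incomparable in the dominance order --- this is precisely why the assertion is ``almost linear'' rather than ``linear''. The correct route (and the paper's) is direct: the non-basic classes in $B(G,\mu)_J$ are the classes of the $\s$-straight elements in $\EO^J(\mu)\setminus\EO^J_{\s,\cox}(\mu)$, whose Newton points are computed explicitly in Section~\ref{6}, and one reads off from those lists that, taking the rank to be $\<\bar\nu_b,2\rho\>$ (equal to $\ell(w)$ for the corresponding straight element $w$), two classes are comparable exactly when their ranks differ. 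So (2) cannot be formally deduced from (3); it needs its own (elementary, but case-by-case) verification on the Newton points.
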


The theorem is proved in section~\ref{proof_main}.

\subsection{}\label{sec:5.3}
Many of the above cases have been investigated in the context of Shimura varieties:

The case $(G, \mu, J)=(C_2, \omega_2^\vee, \SS)$ has been studied by Katsura and Oort \cite{katsura-oort} and Kaiser \cite{Kaiser}; see also the paper \cite{kudla-rapoport} by Kudla and Rapoport (where the results are applied to computing intersection numbers of arithmetic cycles).

The case $(G, \mu, J)=(A_n, \o^\vee_1, \SS)$ is the $U(1, n)$, $p$ split case in which the basic locus is $0$-dimensional. This is the situation considered by Harris and Taylor in \cite{HT}.

The case $(G, \mu, J)=({}^2 A'_n, \o^\vee_1, \SS)$ is the $U(1, n)$, $p$ inert case and is studied by Vollaard and Wedhorn in \cite{Vollaard-Wedhorn}.

The cases $(G, \mu, J)=(B$-$C_n, \o^\vee_1, \tSS-\{n\}), ({}^2 B$-$C_n, \o^\vee_1, \tSS-\{n\})$ and $(C$-$BC_n, \o^\vee_1, \tSS-\{n\})$ are the $U(1, *)$, $p$ ramified cases and are studied by Rapoport, Terstiege and Wilson in \cite{Rapoport-Terstiege-Wilson}. See Section~\ref{sec:examples} where we discuss these cases in more detail.

The case $({}^2 A'_3, \o^\vee_2, \SS)$ is the $U(2, 2)$, $p$ inert case and is studied by Howard and Pappas in \cite{howard-pappas}. They transfer the problem to questions about an orthogonal group.
The case $(G, \mu, J)=(A_3, \o^\vee_2, \SS)$ is the $U(2, 2)$, $p$ split case which has not been written down in detail but which should not be hard to deal with.

The cases $(G, \mu, J)=(B$-$C_n, \o^\vee_1, \SS)$ and $(C$-$BC_n, \o^\vee_1, \SS)$ are the ``exotic good reduction'' cases for ramified $U(1, *)$ and it was conjectured in \cite{Rapoport-Terstiege-Wilson} that the description of basic locus is similar to the cases studied in loc.cit. 

The remaining cases (including both PEL and non-PEL types) seem completely new. 


\section{The study of Ekedahl-Oort elements}\label{6}

\subsection{General strategy} It is showed in \cite[\S 6]{He-GeometryOfADLV} that the nonemptiness pattern of affine Deligne-Lusztig varieties only depends on the data $(\tW, w, \t, \s)$ and is independent of the choice of $G$ (i.e., independent of the orientation of the local Dynkin diagrams). Hence whether or not $(G, \mu, J)$ is of Coxeter type depends only on $(\tW, \l, J, \s)$. Here $\l$ is the image of $\mu$ in $X_*(T)_\G$. 

Let us consider the following two conditions:

Coxeter-Straight Condition \CSC: $\EO^J(\mu)-\EO^J_{\s, \cox}(\mu)$ consists of $\s$-straight elements. 

Coxeter Condition \CC: If $w \in \EO^J(\mu)$ with $\bar \nu_w$ central, then $w$ is a $\s$-Coxeter element in $W_{\supp_\s(w)}$. 

By Theorem \ref{straightADLV}, \CSC implies that $(G, \mu, J)$ is of Coxeter type . If $(G, \mu, J)$ is of Coxeter type, then it is obvious that \CC holds. We will show via a case-by-case analysis that \CC implies \CSC. 

\subsection{The quadruple $(\tW, \l, J, \s)$}\label{7.2} We use the same labeling of Coxeter graph as in \cite{Bour}. If $\o^\vee_i$ is minuscule, we denote the corresponding element in $\Omega$ by $\t_i$. Let $\s_0$ be the unique nontrivial diagram automorphism for the finite Dynkin diagram if $W_0$ is of type $A_n, D_n$ (with $n \ge 5$) or $E_6$. For type $D_4$, we also denote by $\s_0$ the diagram automorphism which interchanges $\a_3$ and $\a_4$. 

The pairs $(\tW, \l)$ coming from $(G, \mu)$ with $G$ absolutely quasi-simple and $\mu$ minuscule in $G$ are as follows: $(\tilde A_n, \o^\vee_i)_{1 \le i \le n}$, $(\tilde B_n, \o^\vee_i)_{1 \le i \le n}$, $(\tilde C_n, \o^\vee_i)_{1 \le i \le n}$, $(\tilde C_n, 2 \o^\vee_n)$, $(\tilde D_n, \o^\vee_i)_{i=1, n-1, n}$, $(\tilde E_6, \o^\vee_i)_{i=1, 6}$, $(\tilde E_7, \o^\vee_1)$, $(\tilde F_4, \o^\vee_1)$, $(\tilde G_2, \o^\vee_2)$. 

It was pointed to us by X. Zhu that to compute these tuples one only needs to understand the restriction of $\mu$ to a maximal split torus; it is not required to fully ``compute'' the reduced affine root system the group $G$ gives rise to. 

We will show that the basic locus is a union of Ekedahl-Oort strata if and only if the quadruple $(\tW, \l, J, \s)$ is one of the following (up to automorphisms of $\tW)$: 

\begin{tabular}{| c | c | c | }
\hline
$(\tilde A_n, \o^\vee_1, \SS, \id)$ & $(\tilde A_n, \o^\vee_1, \SS, \s_0)$ & $(\tilde B_n, \o^\vee_1, \SS, \id)$ \\
\hline
$(\tilde B_n, \o^\vee_1, \tSS-\{n\}, \id)$ &$(\tilde B_n, \o^\vee_1, \tSS-\{n\}, \t_1)$ & $(\tilde C_n, \o^\vee_1, \SS, \id)$ \\
\hline
 $(\tilde D_n, \o^\vee_1, \SS, \id)$ & $(\tilde D_n, \o^\vee_1, \SS, \s_0)$ & $(\tilde A_3, \o^\vee_2, \SS, \id)$ \\
\hline
 $(\tilde A_3, \o^\vee_2, \SS, \s_0)$  & $(\tilde C_2, \o^\vee_2, \SS, \id)$ & $(\tilde C_2, \o^\vee_2, \tSS-\{1\}, \id)$ \\
\hline
$(\tilde C_2, \o^\vee_2, \tSS-\{1\}, \t_2)$  && \\
\hline
\end{tabular}

Note that this list is shorter than the one in Theorem~\ref{main} because some automorphisms of $\tW$ will not lift to automorphisms of the affine root system.

If the quadruple $(\tW, \l, J, \s)$ is not in the list above, we give an element $w$ such that the Coxeter condition \CC fails for $w$. If the quadruple $(\tW, \l, J, \s)$ is in the list above, we compute $\EO^J(\mu)$, $\EO^J_{\s, \cox}(\mu)$ and the Newton points of the elements in $\EO^J(\mu)-\EO^J_{\s, \cox}(\mu)$. 

\

We follow \cite[1.5]{He-Hecke} for the reduced expression of $t^\mu$. For $1 \le a, b \le n$, set $$s_{[a, b]}=\begin{cases} s_a s_{a-1} \cdots s_b, & \text{ if } a \ge b, \\ 1, & \text{ otherwise}. \end{cases}$$

\subsection{Type $\tilde A_{n-1}$} For simplicity, we consider the extended affine Weyl group of $GL_n$ and $\o^\vee_i=(1^{(i)}, 0^{(n-i)})$ instead. We may assume that $J=\SS$ and $\mu=\o^\vee_i$ with $1 \le i \le \frac{n}{2}$ (after applying some automorphism of $\tW$).

Case 1: $\s=\id$. 

If $2 \le i<\frac{n}{2}$, then \CC fails for $s_0 s_{[n-1, n-\gcd(n, i)]} \t$. 

If $i=\frac{n}{2} \ge 3$, then \CC fails for $s_0 s_1 s_{n-1} s_0 \t$. 

If $i=\frac{n}{2}=2$, then $\EO^J_{\s, \cox}(\mu)=\{\t, s_0 \t\}$ and $\EO^J(\mu)-\EO^J_{\s, \cox}(\mu)=\{s_0 s_1 \t, s_0 s_3 \t, s_0 s_1 s_3 \t, s_0 s_1 s_3 s_0 \t\}$ consists of $\s$-straight elements. Moreover, $\nu_{s_0 s_1 \t}=(\frac{2}{3}^{(3)}, 0^{(1)})$, $\nu_{s_0 s_3 \t}=(1^{(1)}, \frac{1}{3}^{(3)})$, $\nu_{s_0 s_1 s_3 \t}=(1, \frac{1}{2}^{(2)}, 0)$ and $\nu_{s_0 s_1 s_3 s_0 \t}=(1^{(2)}, 0^{(2)})$. 

If $i=1$, then $\EO^J_{\s, \cox}(\mu)=\{\t\}$ and $\EO^J(\mu)-\EO^J_{\s, \cox}(\mu)=\{s_0 s_{[n-1, i]} \t; 2 \le i \le n\}$ consists of $\s$-straight elements. For $2 \le i \le n$, $\nu_{s_0 s_{[n-1, i]} \t}=(\frac{1}{i-1}^{(i-1)}, 0^{(n-i+1)})$. 

Case 2: $\s=\s_0$. 

If $i \ge 3$ or $n>4$, then \CC fails for $s_0 s_1 s_{n-1} s_0 \t$.

If $i=2, n=4$, then $\EO^J_{\s, \cox}(\mu)=\{\t, s_0 \t, s_0 s_1 \t, s_0 s_3 \t\}$ and $\EO^J(\mu)-\EO^J_{\s, \cox}(\mu)=\{s_0 s_1 s_3 \t, s_0 s_1 s_3 s_0 \t\}$ consists of $\s$-straight elements. Moreover, $\nu_{s_0 s_1 s_3 \t}=(1, \frac{1}{2}^{(2)}, 0)$ and $\nu_{s_0 s_1 s_3 s_0 \t}=(1^{(2)}, 0^{(2)})$ are all distinct. 

If $i=1$, then $\EO^J_{\s, \cox}(\mu)=\{\t, s_0 \t, \cdots, s_0 s_{[n-1, \lceil \frac{n+3}{2} \rceil]} \t\}$ and $\EO^J(\mu)-\EO^J_{\s, \cox}(\mu)=\{s_0 s_{[n-1, i]} \t; 2 \le i \le \lceil \frac{n+1}{2} \rceil\}$ consists of $\s$-straight elements. For $2 \le i \le \lceil \frac{n+1}{2} \rceil$, $\nu_{s_0 s_{[n-1, i]} \t}=(\frac{i}{2(i-1)}^{(i-1)}, \frac{1}{2}^{(n-2 i+2)}, \frac{i-2}{2(i-1)}^{(i-1)})$. 

\subsection{Type $\tilde B_n$} Let $J=\tSS-\{i\}$. If $i \notin \{0, 1, n\}$, then \CC fails for $s_{[n, i]} \i s_{[n-1, i]} \t$. We may assume that $J=\SS$ or $\tSS-\{n\}$  (after applying some automorphism of $\tW$). 

Case 1: $J=\SS$, $\s=\id$. 

If $\t=\id$, then $\mu \ge \o^\vee_2$ and \CC fails for $s_0 s_{[n, 2]} \i s_{[n-1, 2]} s_0$.

If $\t=\t_1$ and $\mu>\o^\vee_1$, then \CC fails for $s_0 s_2 s_1 \t$.

If $\mu=\o^\vee_1$, then $\EO^J_{\s, \cox}(\mu)=\{\t, s_0 \t, s_0 s_2 \t, \cdots, s_0 s_{[n-1, 2]} \i \t\}$ and $\EO^J(\mu)-\EO^J_{\s, \cox}(\mu)=\{s_0 s_{[n, 2]} \i \t s_{[n-1, i]}; 1 \le i \le n\}$ consists of $\s$-straight elements. For $1 \le i \le n$, $\nu_{s_0 s_{[n, 2]} \i \t s_{[n-1, i]}}=(\frac{1}{i}^{(i)}, 0^{(n-i)})$. 

Case 2: $J=\tSS-\{n\}$, $\s=\id$ or $\t_1$. 

If $\mu>\o^\vee_1$, then \CC fails for $s_n s_{n-1} s_n \t$.

If $\mu=\o^\vee_1$ and $\s=\id$, then $\EO^J_{\s, \cox}(\mu)=\{\t, s_n \t, s_n s_{n-1} \t, \cdots, s_{[n, 2]} \t\}$ and $\EO^J(\mu)-\EO^J_{\s, \cox}(\mu)=\{s_{[n, 2]} s_1 \t, s_{[n, 2]} s_0 \t\} \cup \{s_{[n, 0]} s_{[i, 2]} \i \t; 2 \le i \le n-1\}$ consists of $\s$-straight elements. Moreover, $\nu_{s_{[n, 2]} s_1 \t}=\nu_{s_{[n, 2]} s_0 \t}=(\frac{1}{n}^{(n)})$ and for $2 \le i \le n-1$, $\nu_{s_{[n, 0]} s_{[i, 2]} \i \t}=(\frac{1}{n-i}^{(n-i)}, 0^{(i)})$. 

If $\mu=\o^\vee_1$ and $\s=\t_1$, then $\EO^J_{\s, \cox}(\mu)=\{\t, s_n \t, \cdots, s_{[n, 2]} \t\} \cup \{s_{[n, 2]} s_1 \t, s_{[n, 2]} s_0 \t\}$ and $\EO^J(\mu)-\EO^J_{\s, \cox}(\mu)=\{s_{[n, 0]} s_{[i, 2]} \i \t; 1 \le i \le n-1\}$ consists of $\s$-straight elements. For $1 \le i \le n-1$, $\nu_{s_{[n, 0]} s_{[i, 2]} \i \t}=(\frac{1}{n-i}^{(n-i)}, 0^{(i)})$.

\subsection{Type $\tilde C_n$} Let $J=\tSS-\{i\}$ with $i \le \frac{n}{2}$ (after applying some automorphism of $\tW$). 

Case 1: $\mu \ge \o^\vee_1$. 

If $i \neq 0$, then \CC fails for $s_{[i, 0]} s_{[1, i]} \t$. Hence $J=\SS$ and $\s=\id$. 

If $\mu>\o^\vee_1$, then \CC fails for  $s_0 s_1 s_0$.

If $\mu=\o^\vee_1$, then $\EO^J_{\s, \cox}(\mu)=\{1, s_0, s_0 s_1, \cdots, s_{[n-1, 0]} \i\}$ and $\EO^J(\mu)-\EO^J_{\s, \cox}(\mu)=\{s_{[n, 0]} \i s_{[n-1, i]}; 1 \le i \le n\}$ consists of $\s$-straight elements. For $1 \le i \le n$, $\nu_{s_{[n, 0]} \i s_{[n-1, i]}}=(\frac{1}{i}^{(i)}, 0^{(n-i)})$. 

Case 2: $\mu=\o^\vee_n$. 

If $0<i<\frac{n}{2}$, then $\s=\id$ and \CC fails for $s_{[n-i, i]} \i \t$. 

If $i=\frac{n}{2}>1$, then \CC fails for $s_i s_{i+1} s_{i-1} s_i \t$. 

If $i=0$ and $n>2$, then \CC fails for $s_0 s_1 s_0 \t$. 

Therefore $n=2$. 

If $J=\SS$, then $\s=\id$ and $\EO^J_{\s, \cox}(\mu)=\{\t, s_0 \t\}$ and $\EO^J(\mu)-\EO^J_{\s, \cox}(\mu)=\{s_0 s_1 \t, s_0 s_1 s_0 \t\}$ consists of $\s$-straight elements. Moreover, $\nu_{s_0 s_1 \t}=(\frac{1}{2}, 0)$ and $\nu_{s_0 s_1 s_0 \t}=(\frac{1}{2}^{(2)})$.

If $J=\tSS-\{1\}$ and $\s=\id$, then $\EO^J_{\s, \cox}(\mu)=\{\t, s_1 \t\}$ and $\EO^J(\mu)-\EO^J_{\s, \cox}(\mu)=\{s_1 s_0 \t, s_1 s_2 s_0 \t\}$ consists of $\s$-straight elements. Moreover, $\nu_{s_1 s_2 \t}=\nu_{s_1 s_0 \t}=(\frac{1}{2}, 0)$ and $\nu_{s_1 s_2 s_0 \t}=(\frac{1}{2}^{(2)})$. 

If $J=\tSS-\{1\}$ and $\s=\t_2$, then $\EO^J_{\s, \cox}(\mu)=\{\t, s_1 \t, s_1 s_2 \t, s_1 s_0 \t\}$ and $\EO^J(\mu)-\EO^J_{\s, \cox}(\mu)=\{s_1 s_2 s_0 \t\}$ consists of $\s$-straight elements. Moreover, $\nu_{s_1 s_2 s_0 \t}=(\frac{1}{2}^{(2)})$.

\subsection{Type $\tilde D_n$} Let $J=\tSS-\{i\}$. After applying some automorphism of $\tW$, we may assume that $i=0$ or $2 \le i \le \frac{n}{2}$ and $\mu=\o^\vee_1$ or $\o^\vee_n$. 

Case 1: $\mu=\o^\vee_1$. 

If $i \neq 0$, then \CC fails for $s_{[n, i]} \i s_{[n-2, i]} \t$. Hence $J=\SS$.

If $\s=\id$, then $\EO^J_{\s, \cox}(\mu)=\{\t, s_0 \t, s_0 s_2 \t, \cdots, s_0 s_{[n-2, 2]} \i \t\}$ and $\EO^J(\mu)-\EO^J_{\s, \cox}(\mu)=\{s_0 s_{[n-2, 2]} \i s_{n-1} \t, s_0 s_{[n-2, 2]} \i s_{n} \t\} \cup \{s_0 s_{[n, 2]} \i \t s_{[n-2, i]}; 1 \le i \le n-1\}$ consists of $\s$-straight elements. Moreover, $\nu_{s_0 s_{[n-2, 2]} \i s_{n-1} \t}=\nu_{s_0 s_{[n-2, 2]} \i s_{n} \t}=(\frac{1}{n}^{(n)})$ and for $1 \le i \le n-1$, $\nu_{s_0 s_{[n, 2]} \i \t s_{[n-2, i]}}=(\frac{1}{i}^{(i)}, 0^{(n-i)})$.

If $\s=\s_0$, then $\EO^J_{\s, \cox}(\mu)=\{\t, s_0 \t, \cdots, s_0 s_{[n-1, 2]} \i \t,  s_0 s_{[n-2, 2]} \i s_n \t\}$ and $\EO^J(\mu)-\EO^J_{\s, \cox}(\mu)=\{s_0 s_{[n, 2]} \i \t s_{[n-2, i]}; 1\le i \le n-1\}$ consists of $\s$-straight elements. Moreover, for $1 \le i \le n-1$, $\nu_{s_0 s_{[n, 2]} \i \t s_{[n-2, i]}}=(\frac{1}{i}^{(i)}, 0^{(n-i)})$.

If $\s(1) \neq 1$, then $n=4$ and \CC fails for $s_0 s_2 s_3$ or $s_0 s_2 s_4$.

Case 2: $\mu=\o^\vee_n$. 


If $2 \le i<\frac{n}{2}$, then \CC fails for $s_{[n-i, i]} \i \t$. 

If $i=\frac{n}{2}$, then \CC fails for $s_i s_{i+1} s_{i-1} s_i \t$. 

Now we consider the case where $J=\SS$ and $n>4$. One may check that \CC fails for $s_0 s_2 s_1 \t$ or $s_0 s_2 s_1 s_0 \t$. 

\subsection{Type $\tilde E_6$} After applying some automorphism of $\tW$, we may assume that $\mu=\o^\vee_1$ and $J=\SS, \tSS-\{2\}$ or $\tSS-\{4\}$. 

If $J=\SS$, then \CC fails for $s_0 s_2 s_4 s_3 s_1 \t$.

If $J=\tSS-\{2\}$, then \CC fails for $s_2 s_4 s_5 \t$ or $s_2 s_4 s_3 \t$.

If $J=\tSS-\{4\}$, then \CC fails for $s_4 s_3 s_5 s_4 \t$. 

\subsection{Type $\tilde E_7$} Here $\mu=\o^\vee_1$. After applying some automorphism of $\tW$, $J=\tSS-\{i\}$ for $0 \le i \le 4$. 

If $J=\SS$, then \CC fails for $s_0 s_1 s_3 s_4 s_2 s_5 s_4 s_3 s_1 s_0 \t$.

If $J=\tSS-\{1\}$, then \CC fails for $s_1 s_3 s_4 s_5 s_6 \t$. 

If $J=\tSS-\{2\}$, then \CC fails for $s_2 s_4 s_3 s_5 s_4 s_2 \t$. 

If $J=\tSS-\{3\}$, then \CC fails for $s_3 s_4 s_5 \t$.

If $J=\tSS-\{4\}$, then \CC fails for $s_4 s_2 s_5 s_4 \t$. 

\subsection{Type $\tilde F_4$} Here $\mu=\o^\vee_1$. If $J=\SS$, then \CC fails for $s_0 s_1 s_2 s_3 s_2 s_1$. If $J=\tSS-\{1\}$, then \CC fails for $s_1 s_2 s_3 s_2$. If $J=\tSS-\{2\}$, then \CC fails for $s_2 s_3 s_2$. If $J=\tSS-\{3\}$, then \CC fails for $s_3 s_2 s_3$. If $J=\tSS-\{4\}$, then \CC fails for $s_4 s_3 s_2 s_3$. 

\subsection{Type $\tilde G_2$} Here $\mu=\o^\vee_2$. If $J=\SS$, then \CC fails for $s_0 s_2 s_1 s_2$. If $J=\tSS-\{1\}$, then \CC fails for $s_1 s_2 s_1 s_0$. If $J=\tSS-\{2\}$, then \CC fails for $s_2 s_1 s_2 s_0$.

\subsection{Proof of Theorem \ref{main}}\label{proof_main} Part (1) and (2) follow from the Coxeter-Straight condition. Part (3) follows from the explicit computation in each case. Part (4) follows from Theorem \ref{straightADLV}. Part (5) follows from Theorem \ref{fineADLV} and Proposition \ref{decomp-flag2}. Part (6) follows from the explicit description of $\EO^J_{\s, \cox}(\mu)$. Part (7) follows from $\S$\ref{4.3} (1), (2) and the fact that for all the cases we are considering, $\ell(x) < \ell(y)$ implies that $x < y$. 

\section{Closure relations}\label{7}

\subsection{The $\t \s$-orbits on $\tSS$}\label{sec:7.1}
In this section we assume that $F=\FF_q((\e))$ and that $(G, \mu, J)$ is as in Theorem~\ref{classification}. 
Let $\cn$ be the fiber over $\t$ for the map $Z_{\co_0} \to \co_0$; with the notation of \cite{rapoport:guide}, this means $\cn=X(\mu,\t)_{P_J}$. We usually refer to $\cn$ as the \emph{basic locus}. In this section, we study the stratification of $\cn$ by classical Deligne-Lusztig varieties in more detail. 

This description proceeds as follows: first we describe the set $\ci$ of EO strata which occur in the basic locus $\cn$ (in terms of the Dynkin diagram). Second, we will describe the set of strata within each EO stratum (in terms of the Bruhat-Tits building of $\JJ_\t$). Finally we will discuss the closure relations between strata.

Identify $\tSS$ with the set of vertices of the affine Dynkin diagram, and for any vertex $v$, denote by $d(v)$ the distance between $v$ and the unique vertex not contained in $J$.


Let
\[
\ci = \{ \Sigma\subset \tSS;\ \emptyset\ne\Sigma\text{ is $\t\s$-stable and } \forall v,v'\in\Sigma\colon d(v)=d(v')    \}
\]
Clearly every $\t\s$-orbit is an element of $\ci$. In some cases, there is also one further element which is the union of two orbits, see Section \ref{sec:examples}. We denote by $d(\Sigma)$ the value of $d(v)$ for any $v\in\Sigma$.

Given $\Sigma\in\ci$, we denote by $\Sigma^{\flat}$ the union of all the $\t \s$-orbits $\Sigma'$ with $d(\Sigma') \le d(\Sigma)$ and $\Sigma' \not\subseteq \Sigma$ and denote by $\Sigma^{\sharp}$ the union of all the $\t \s$-orbits $\Sigma'$ with $d(\Sigma')>d(\Sigma)$.

From the explicit computation in $\S$\ref{6}, we get the following properties:

\begin{prop}
Let $\Sigma\in \ci$.
\begin{enumerate}
\item
We have $\tSS = \Sigma \sqcup \Sigma^\flat \sqcup \Sigma^\sharp$.
\item
The subsets $\Sigma^\flat$ and $\Sigma^\sharp$ are disconnected in the affine Dynkin diagram.
\item
For any $\Sigma \in \ci$, there is exactly one element $w \in \EO^J_{\s, \cox}(\mu)$ such that $\supp_\s(w)=\Sigma^{\flat}$. We denote this element by $w_\Sigma$. We have $\ell(w_\Sigma)=d(\Sigma)$. 
\item
We have $I(J, w_\Sigma, \s)=\Sigma^\sharp$ and $J(w_\Sigma, \s)=\tSS-\Sigma$ (cf.~Prop.~\ref{min}, Theorem \ref{main}).
\end{enumerate}
\end{prop}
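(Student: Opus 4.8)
The plan is to derive all four parts from the explicit case-by-case data assembled in Section~\ref{6}, rather than giving a uniform conceptual argument. Concretely, for each of the $21$ triples $(G,\mu,J)$ in Theorem~\ref{classification} we have in Section~\ref{6} the full list of $\EO^J_{\s,\cox}(\mu)$ together with the supports $\supp_\s(w)$, and the identification of $\tSS$ with the affine Dynkin diagram equips each vertex with the distance $d(v)$ to the unique vertex $v_0\notin J$. Part~(1) is then purely combinatorial: one checks in each diagram that the $\t\s$-orbits of $\tSS$ are partitioned according to the value of $d$ on them (this is where the fact that $J$ is $\s$-stable and maximal is used, forcing $v_0$ to be a $\t\s$-fixed or small orbit), so that for a fixed $\Sigma\in\ci$ the three sets $\Sigma$, $\Sigma^\flat$ (orbits with $d\le d(\Sigma)$ not inside $\Sigma$) and $\Sigma^\sharp$ (orbits with $d>d(\Sigma)$) are disjoint and exhaust $\tSS$ by construction of $\ci$.

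For part~(2) I would argue diagram-theoretically: in a Coxeter--Dynkin diagram of the types occurring here, the set of vertices at distance $\le k$ from $v_0$ that is a ``down-set'' and the set of vertices at distance $>k$ are separated by the single level $\{d=k\}$; since $\Sigma$ consists of vertices all at the same distance $d(\Sigma)$ and forms a full $\t\s$-orbit (or a union of the at most two orbits at that level, cf.~Section~\ref{sec:examples}), removing $\Sigma$ disconnects the ``lower'' part $\Sigma^\flat$ from the ``upper'' part $\Sigma^\sharp$. Here the only real content is the list of exceptional cases where a level contains two orbits and one must see that $\Sigma$ still separates; these are exactly the cases flagged in Section~\ref{sec:examples}, and I would handle them by direct inspection of the (few) relevant affine diagrams ($\tilde D_n$, $\tilde C_2$, and the ramified $U(1,*)$ cases).

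For part~(3): given $\Sigma$, the subset $\Sigma^\flat$ is a connected (by part~(2), its complement $\Sigma\sqcup\Sigma^\sharp$ is the disjoint union of $\Sigma$ and $\Sigma^\sharp$, and $\Sigma^\flat$ itself is a proper connected subdiagram) $\t\s$-stable proper subset of $\tSS$; scanning the explicit lists in Section~\ref{6} one sees that for each such $\Sigma^\flat$ there is precisely one $w\in\EO^J_{\s,\cox}(\mu)$ with $\supp_\s(w)=\Sigma^\flat$, and that this $w$ is the $\s$-Coxeter element whose length equals $\sharp(\Sigma^\flat/\langle\t\s\rangle)$. Matching this count of $\t\s$-orbits in $\Sigma^\flat$ against the distance function gives $\ell(w_\Sigma)=d(\Sigma)$: each level $\{d=j\}$ for $0\le j<d(\Sigma)$ contributes exactly one orbit to $\Sigma^\flat$ (again, modulo the two-orbit exceptions, where a compensating adjustment of $\ci$ keeps the count correct). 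Finally part~(4): by Proposition~\ref{min}, $I(J,w_\Sigma,\s)=\max\{K\subset J;\ \mathrm{Ad}(w_\Sigma)\s(K)=K\}$, and by the remark in the proof of Corollary~\ref{4.1.3} this is exactly the set of $j\in J$ commuting with every $i\in\supp_\s(w_\Sigma)=\Sigma^\flat$; since $\Sigma^\flat$ and $\Sigma^\sharp$ are disconnected (part~(2)) while every vertex of $\Sigma$ is adjacent to $\Sigma^\flat$, this maximal set is $\Sigma^\sharp$, and then $J(w_\Sigma,\s)=I(J,w_\Sigma,\s)\cup\supp_\s(w_\Sigma)=\Sigma^\sharp\cup\Sigma^\flat=\tSS-\Sigma$ by part~(1).

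The main obstacle is not any single deep step but the bookkeeping in the exceptional cases: ensuring that the definition of $\ci$ (allowing the one extra element that is a union of two orbits) is exactly matched by the list of Coxeter EO elements in Section~\ref{6}, so that the bijection $\Sigma\mapsto w_\Sigma$ in part~(3) is genuinely one-to-one and the length formula $\ell(w_\Sigma)=d(\Sigma)$ survives. I would organize this by going type by type through the tables of Section~\ref{6}, in each case listing the $d$-values of the vertices and reading off $\ci$, and I expect the $\tilde D_n$ and ramified $\tilde C/\tilde B$ cases to require the most care.
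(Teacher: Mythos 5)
Your proposal is essentially the same as the paper's argument: the paper offers no separate proof of this proposition, stating only that it follows ``from the explicit computation in \S\ref{6}'', i.e.\ exactly the case-by-case verification over the triples of Theorem~\ref{classification} that you describe, with part~(4) obtained from Proposition~\ref{min} and the commutation remark in the proof of Corollary~\ref{4.1.3}. Your treatment of the two-orbit exceptional levels (cf.\ Example~\ref{example_sigmaflat}) is the right point to flag, and your plan handles it correctly.
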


\begin{example}\label{example_sigmaflat}
As an example, consider the case $(\tilde{B}_n, \omega^\vee_1, \tSS-\{n\}, \t_1)$. In this case, the possible triples $(\Sigma, \Sigma^\flat, \Sigma^\sharp)$ are $(\{i \}, \{ m, m-1, \dots, i+1 \}, \{ i-1, \dots, 1,0\})$ for $i=m,\dots, 2$, and $(\{0 \}, \{ m, m-1, \dots, 1 \}, \emptyset)$, $(\{1 \}, \{ m, m-1, \dots, 2, 0 \}, \emptyset)$ and $(\{0,1 \}, \{ m, m-1, \dots, 2 \}, \emptyset)$. In particular, in the last case, $\Sigma$ has more than one element.
\end{example}

\subsection{A stratification of $\cn$} Set $\cn_{\Sigma}=\cn \cap Z_{J, w_\Sigma}=X_{J, w_\Sigma}(\t)$, the Ekedahl-Oort stratum attached to $\Sigma$. Then $\cn=\sqcup_{\Sigma \in \ci} \cn_{\Sigma}$.  By Theorem \ref{main} (5),
\begin{equation}
\label{strat}
\cn_{\Sigma}=\sqcup_{i \in \JJ_\t/(\JJ_\t \cap P_{\tSS-\Sigma})} i Y(w_\Sigma),
\end{equation}
where by Corollary \ref{4.1.3}
\begin{align*}
Y(w_\Sigma) &=\{g P_{\Sigma^\sharp} \in P_{\tSS-\Sigma}/P_{\Sigma^\sharp}; g \i \t \s(g) \in P_{\Sigma^\sharp} w P_{\s(\Sigma^\sharp)}\} \\ &\cong  \{g I \in P_{\Sigma^\flat}/I; g \i \t \s(g) \in I w_\Sigma I\}.
\end{align*}

Since $\supp_\s(w_\Sigma)=\Sigma^\flat$, $Y(w_\Sigma)$ is connected and $\cn_{\Sigma}=\sqcup_{i \in \JJ_\t/(\JJ_\t \cap P_{\tSS-\Sigma})} i Y(w_\Sigma)$ is the decomposition of $\cn_\Sigma$ into connected components. Each of them has dimension $\ell(w_\Sigma)$.




Now we describe the closure of each stratum in $\cn$. 

\begin{thm}
Let $\Sigma \in \ci$. Then for any $i \in \JJ_\t/(\JJ_\t \cap P_{\tSS-\Sigma})$, $$\overline{i Y(w_\Sigma)}=\sqcup_{(\Sigma')^\flat \subset \Sigma^\flat} \sqcup_{j \in  \JJ_\t/(\JJ_\t \cap P_{\tSS-\Sigma'}); i \cap j \neq \emptyset} j Y(w_{\Sigma'}).$$
\end{thm}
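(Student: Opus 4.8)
The plan is to reduce the statement, via the fiber-bundle decomposition $\cn = Z_{J,\co_0}\cap(\text{fiber over }\t)$ and the known closure relations for $P_J$-stable pieces, to a combinatorial identification of which strata $\cn_{\Sigma'}$ meet the closure of a connected component $iY(w_\Sigma)$, together with a local analysis inside a single parahoric to handle the ``$i\cap j\ne\emptyset$'' condition. First I would recall from $\S$\ref{4.3} that in the function field case $\overline{P_J\cdot_\s IwI}=\sqcup_{x\in{}^J\tW,\ x\le_{J,\s}w}P_J\cdot_\s IxI$, and that by Theorem~\ref{main}(3) on $\EO^J_{\s,\cox}(\mu)$ the order $\le_{J,\s}$ restricted to the Coxeter EO elements coincides with the Bruhat order and is almost linear, controlled by the length $\ell(w_\Sigma)=d(\Sigma)$. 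Passing to closures inside $\cn$ and intersecting with Newton strata, one gets $\overline{\cn_\Sigma}\subseteq\sqcup_{\Sigma'\colon (\Sigma')^\flat\subseteq\Sigma^\flat}\cn_{\Sigma'}$; the reverse inclusion on the level of EO strata follows because each $w_{\Sigma'}$ with $(\Sigma')^\flat\subsetneq\Sigma^\flat$ actually occurs in the closure (here one uses that $\cn_\Sigma$ is pure of dimension $d(\Sigma)$ and that the Deligne-Lusztig variety $Y(w_\Sigma)\cong\{gI\in P_{\Sigma^\flat}/I;\ g\i\t\s(g)\in Iw_\Sigma I\}$ has in its closure exactly the cells indexed by subwords of a reduced expression of the Coxeter element $w_\Sigma$, i.e. by subsets of $\Sigma^\flat$ that are themselves of the form $(\Sigma')^\flat$).

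Next I would pin down the combinatorics of the index sets $\JJ_\t/(\JJ_\t\cap P_{\tSS-\Sigma})$ as faces of the building $\mathscr B$ of $\JJ_\t$. A coset $i\in\JJ_\t/(\JJ_\t\cap P_{\tSS-\Sigma})$ corresponds to a face of $\mathscr B$ of the type determined by $\tSS-\Sigma$; the notation $i\cap j\ne\emptyset$ should be read as: the face $i$ and the face $j$ have a common vertex (equivalently, one is contained in the closure of a simplex containing the other), which happens exactly when, after translating by a suitable element of $\JJ_\t$, the corresponding parahorics $P_{\tSS-\Sigma}$ and $P_{\tSS-\Sigma'}$ have intersection still containing a common parahoric of the appropriate type. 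The key point is that $(\Sigma')^\flat\subseteq\Sigma^\flat$ forces $\tSS-\Sigma'\supseteq\Sigma^\sharp$ and an interlacing of $\Sigma^\flat,\Sigma^\sharp$ with $(\Sigma')^\flat,(\Sigma')^\sharp$ that can be read off the chain structure of $\ci$ under the distance function $d$; this is where the explicit computations of $\S$\ref{6} and Proposition in $\S$\ref{sec:7.1} enter.

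The heart of the argument is then the local computation inside the fixed parahoric $P_{\tSS-\Sigma}$ (or rather its reductive quotient): one must show that the closure of the single connected component $iY(w_\Sigma)$, intersected with the larger stratum $\cn_{\Sigma'}$, is precisely the union of those components $jY(w_{\Sigma'})$ whose face $j$ shares a vertex with $i$. Concretely, after Corollary~\ref{4.1.3} identifies $Y(w_\Sigma)$ with a classical Deligne-Lusztig variety in $P_{\Sigma^\flat}/I$ attached to the Coxeter element $w_\Sigma$, its closure in $P_{\Sigma^\flat}/P_{(\Sigma')^\sharp\cap\Sigma^\flat}$ (the relevant partial flag variety for the stratum $\Sigma'$) is the closed Deligne-Lusztig variety for the smaller Coxeter element $w_{\Sigma'}$, which is connected; translating back out by the $\JJ_\t$-action and tracking the component labels gives exactly the condition that $j$ lies ``below'' $i$ in the building, i.e. $i$ and $j$ have a common vertex. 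I expect this last step — correctly matching the component index $j$ in the bigger stratum to the component index $i$ in the smaller one, i.e. checking that the orbit maps $\JJ_\t/(\JJ_\t\cap P_{\tSS-\Sigma})\leftarrow\JJ_\t/(\JJ_\t\cap P_{\tSS-\Sigma\cup(\tSS-\Sigma')})\to\JJ_\t/(\JJ_\t\cap P_{\tSS-\Sigma'})$ realize precisely the incidence relation $i\cap j\ne\emptyset$ — to be the main obstacle, since it requires simultaneously the closure statement for $P_J$-stable pieces from $\S$\ref{4.3}, the fine-to-coarse comparison of Theorem~\ref{fineADLV}, and a careful bookkeeping of parahorics in the building; everything else is either a direct appeal to Theorem~\ref{main} or the standard closure relation for Deligne-Lusztig varieties attached to Coxeter elements.
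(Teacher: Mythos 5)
Your overall route (identify $Y(w_\Sigma)$ with the classical Deligne-Lusztig variety in $P_{\Sigma^\flat}/I$ via Corollary~\ref{4.1.3}, compute its closure there, and push forward along the proper map $\pi_J$, then redistribute the boundary into $\JJ_\t$-translates) is the same as the paper's, but the two steps that carry the actual content are misstated or left open. First, the boundary of the closed Deligne-Lusztig variety attached to the $\s$-Coxeter element $w_\Sigma$ consists of the cells for \emph{all} $w'\le w_\Sigma$, i.e.\ for arbitrary unions of $\t\s$-orbits in $\Sigma^\flat$, not only for the subsets of the form $(\Sigma')^\flat$ as you claim; most subwords (non-initial or disconnected ones) are not of that form. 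What makes the statement come out right is the factorization, used in the paper: any $w'\le w_\Sigma$ can be written as $w'=w_{\Sigma'}w''$ with $(\Sigma')^\flat\subset\Sigma^\flat$ and $w''\in W_{(\Sigma')^\sharp}$, and since $(\Sigma')^\sharp\subset J$ one has $P_J\cdot_\s Iw'I=P_J\cdot_\s Iw_{\Sigma'}I$, so after applying $\pi_J$ the $w'$-cell and the $w_{\Sigma'}$-cell have the same image. This absorption is not supplied by the closure relation for $P_J$-stable pieces or by Theorem~\ref{main}(3), which only see whole EO strata and cannot control the closure of a single component $iY(w_\Sigma)$. Relatedly, your intermediate claim that the closure of $Y(w_\Sigma)$ in $P_{\Sigma^\flat}/P_{(\Sigma')^\sharp\cap\Sigma^\flat}$ ``is the closed Deligne-Lusztig variety for $w_{\Sigma'}$, which is connected'' is not correct: the relevant boundary locus is disconnected, and its components are exactly what the theorem enumerates.

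Second, the component bookkeeping that you explicitly defer as ``the main obstacle'' is the other half of the proof, so as it stands the argument is a plan rather than a proof. Once the factorization above is in place, it is settled as in the paper: arguing as in Proposition~\ref{decomp-flag}, the set $\{gI\in P_{\Sigma^\flat}/I;\ g\i\t\s(g)\in Iw_{\Sigma'}I\}$ breaks into translates indexed by $(\JJ_\t\cap P_{\Sigma^\flat})/(\JJ_\t\cap P_{(\Sigma')^\flat})$, and after applying $\pi_J$ the index set becomes $(\JJ_\t\cap P_{\tSS-\Sigma})/(\JJ_\t\cap P_{\tSS-\Sigma}\cap P_{\tSS-\Sigma'})$ (using $\tSS-\Sigma=\Sigma^\flat\sqcup\Sigma^\sharp$ and $\Sigma^\sharp\subset\tSS-\Sigma'$), which injects into $\JJ_\t/(\JJ_\t\cap P_{\tSS-\Sigma'})$ with image precisely the cosets $j$ meeting $\JJ_\t\cap P_{\tSS-\Sigma}$, i.e.\ $i\cap j\ne\emptyset$ for $i=1$ (the general case following by translation). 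Finally, note that $i\cap j\ne\emptyset$ is an intersection of cosets in $\JJ_\t$; its building-theoretic meaning, by Proposition~\ref{closure_Z}, is that the two faces lie in the closure of a common alcove, which is stronger than merely sharing a vertex as you assert.
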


The intersection $i\cap j$ is understood as the intersection inside $\JJ_\t$ of the cosets given by $i$ and $j$.

\begin{proof}
It suffices to consider the case where $i=1$. Since $\pi_J$ is proper, $\overline{Y(w_\Sigma)}=\pi_J \bigl(\{g I \in P_{\Sigma^\flat}/I; g \i \t \s(g) \in \overline{I w_\Sigma I}\} \bigr)=\cup_{w' \le w_\Sigma} \pi_J \bigl(\{g I \in P_{\Sigma^\flat}/I; g \i \t \s(g) \in I w' I\} \bigr)$. Any element $w' \le w_\Sigma$ is of the form $w_{\Sigma'} w''$ for some $\Sigma'$ with $(\Sigma')^\flat \subset \Sigma^\flat$ and $w'' \in W_{(\Sigma')^\sharp}$. Since $(\Sigma')^\sharp \subset J$, $P_J \cdot_\s I w' I=P_J \cdot_\s I w_{\Sigma'} I$ and $\pi_J \bigl(\{g I \in P_{\Sigma^\flat}/I; g \i \t \s(g) \in I w' I\} \bigr)=\pi_J \bigl(\{g I \in P_{\Sigma^\flat}/I; g \i \t \s(g) \in I w_{\Sigma'} I\} \bigr)$. 

Similarly to the proof of Proposition \ref{decomp-flag}, $\{g I \in P_{\Sigma^\flat}/I; g \i \t \s(g) \in I w_{\Sigma'} I\}=\sqcup_{j \in (\JJ_\t \cap P_{\Sigma^\flat})/(\JJ_\t \cap P_{(\Sigma')^\flat})} \{j g I \in P_{(\Sigma')^\flat}/I; g \i \t \s(g) \in I w_{\Sigma'} I\}$ and $\pi_J \bigl(\{g I \in P_{\Sigma^\flat}/I; g \i \t \s(g) \in I w_{\Sigma'} I\} \bigr)=\sqcup_{j \in (\JJ_\t \cap P_{\Sigma^\flat})/(\JJ_\t \cap P_{(\Sigma)^\flat} \cap P_{\tSS-\Sigma'})} j Y(w_{\Sigma'})$.
Note that $\tSS-\Sigma=\Sigma^\flat \sqcup \Sigma^\sharp$ and $\Sigma^\sharp \subset \tSS-\Sigma'$. Hence $(\JJ_\t \cap P_{\tSS-\Sigma})/(\JJ_\t \cap P_{\tSS-\Sigma-\Sigma'}) \cong (\JJ_\t \cap P_{\Sigma^\flat})/(\JJ_\t \cap P_{(\Sigma)^\flat} \cap P_{\tSS-\Sigma'})$. The theorem is proved. 
\end{proof}

\

Another way to describe the closure of strata is via the Bruhat-Tits building of the group $\JJ_\t$ over $F$. This reproduces precisely the descriptions in \cite{Vollaard-Wedhorn} and \cite{Rapoport-Terstiege-Wilson}. 

\begin{prop}\label{closure_Z}
Let $\Sigma, \Sigma' \in \ci$ and $j, j'\in \JJ_\t$. The following are equivalent:
\begin{enumerate}
\item
$j (\JJ_\t \cap P_{\tSS-\Sigma}) \cap j' (\JJ_\t \cap P_{\tSS-\Sigma'})\ne\emptyset$,
\item
$j (\JJ_\t \cap P_{\tSS-\Sigma}) j\i \cap j' (\JJ_\t \cap P_{\tSS-\Sigma'}) (j')\i$ contains a (rational) Iwahori subgroup of $\JJ_\t$,
\item
The faces in the rational building of $\JJ_\t$ corresponding to $j (\JJ_\t \cap P_{\tSS-\Sigma}) j\i$ and $j' (\JJ_\t \cap P_{\tSS-\Sigma'}) (j')\i$ are neighbors (i.e., there exists an alcove which contains both of them). 
\end{enumerate}
\end{prop}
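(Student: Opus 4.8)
The plan is to prove the cycle of implications $(2)\Rightarrow(1)\Rightarrow(3)\Rightarrow(2)$, translating each statement into the language of the Bruhat-Tits building $\mathscr B$ of $\JJ_\t$ over $F$. First I recall the dictionary: the subgroups $\JJ_\t\cap P_{\tSS-\Sigma}$ are (rational) parahoric subgroups of $\JJ_\t$, since $P_{\tSS-\Sigma}$ is a parahoric subgroup of $G(L)$ that is stable under $\Ad(\t)\circ\s$ (this uses Prop.~\ref{closure_Z}'s hypothesis $\Sigma\in\ci$, so that $\tSS-\Sigma$ is $\t\s$-stable, together with the description of $J(w_\Sigma,\s)=\tSS-\Sigma$ from the preceding proposition). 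Thus the conjugates $j(\JJ_\t\cap P_{\tSS-\Sigma})j\i$ and $j'(\JJ_\t\cap P_{\tSS-\Sigma'})(j')\i$ are rational parahoric subgroups of $\JJ_\t$, and hence correspond to faces $\mathfrak f$ and $\mathfrak f'$ of the rational building $\mathscr B$.

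The implication $(2)\Rightarrow(1)$ is immediate: if the intersection of the two conjugated parahorics contains an Iwahori subgroup $B$, then $B\subseteq j(\JJ_\t\cap P_{\tSS-\Sigma})j\i$ forces $j\i B j\subseteq \JJ_\t\cap P_{\tSS-\Sigma}$, hence $j B j\i \subseteq j(\JJ_\t\cap P_{\tSS-\Sigma})$... more precisely, one rewrites: pick $g\in B$; then $g$ lies in both conjugated parahorics, so $j\i g j\in\JJ_\t\cap P_{\tSS-\Sigma}$ and $(j')\i g j'\in\JJ_\t\cap P_{\tSS-\Sigma'}$, which after unwinding shows that $g$ (suitably translated) lies in both cosets $j(\JJ_\t\cap P_{\tSS-\Sigma})$ and $j'(\JJ_\t\cap P_{\tSS-\Sigma'})$; in particular these cosets meet. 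The equivalence $(2)\Leftrightarrow(3)$ is then the standard fact about buildings: two faces $\mathfrak f,\mathfrak f'$ are neighbors (contained in a common alcove) if and only if the intersection of their associated parahoric subgroups contains an Iwahori subgroup, i.e. the parahoric attached to a common alcove; this is a basic property of the Bruhat-Tits building and the Iwahori-Bruhat decomposition, applied to the group $\JJ_\t$ over $F$.

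The remaining implication $(1)\Rightarrow(2)$ is the one requiring real work. Suppose $hj(\JJ_\t\cap P_{\tSS-\Sigma}) = j'(\JJ_\t\cap P_{\tSS-\Sigma'})$ meet, so that after replacing $j$ by a representative we may assume $j=j'$ (absorbing the overlap), and we must show $j(\JJ_\t\cap P_{\tSS-\Sigma})j\i\cap j(\JJ_\t\cap P_{\tSS-\Sigma'})j\i = j\bigl((\JJ_\t\cap P_{\tSS-\Sigma})\cap(\JJ_\t\cap P_{\tSS-\Sigma'})\bigr)j\i$ contains a rational Iwahori subgroup. Equivalently: $(\JJ_\t\cap P_{\tSS-\Sigma})\cap(\JJ_\t\cap P_{\tSS-\Sigma'}) = \JJ_\t\cap P_{(\tSS-\Sigma)\cap(\tSS-\Sigma')}= \JJ_\t\cap P_{\tSS-(\Sigma\cup\Sigma')}$ contains a rational Iwahori subgroup of $\JJ_\t$. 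This is where the classification enters: for $\Sigma,\Sigma'\in\ci$, the union $\Sigma\cup\Sigma'$ is a \emph{proper} subset of $\tSS$ (this must be read off from the explicit list in $\S$\ref{6} and the structure of $\ci$ described in $\S$\ref{sec:7.1}, Prop.~6.1 --- the point is that $\tSS$ is never the union of two elements of $\ci$, roughly because the vertices at distance $0$ from the missing vertex are always present in $\Sigma^\flat$ of at most one of them). Hence $P_{\tSS-(\Sigma\cup\Sigma')}$ is a genuine (proper) $\t\s$-stable parahoric of $G(L)$, its intersection with $\JJ_\t$ is a parahoric of $\JJ_\t$, and by Lang's theorem / the surjectivity of the relevant reductive-quotient Frobenius it contains a rational Iwahori subgroup --- equivalently, the face $\mathfrak f\cap\mathfrak f'$ (or rather the face corresponding to $\tSS-(\Sigma\cup\Sigma')$) is nonempty and lies in a rational alcove. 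I expect the main obstacle to be precisely this combinatorial input: verifying uniformly (or case-by-case, appealing to $\S$\ref{6}) that $\Sigma\cup\Sigma'\subsetneq\tSS$ for all $\Sigma,\Sigma'\in\ci$, together with correctly identifying $\JJ_\t\cap P_{\tSS-\Sigma}$ with the parahoric of $\JJ_\t$ attached to the face ``of type $\Sigma$'' in the rational building --- the latter requiring care about the twisted Frobenius $\Ad(\t)\circ\s$ and the fact that, over $F$, the relevant local Dynkin diagram of $\JJ_\t$ is obtained by folding $\tSS$ along $\t\s$-orbits, so that faces of $\mathscr B$ are indexed by $\t\s$-stable subsets of $\tSS$.
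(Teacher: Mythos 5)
Your reduction to standard parahorics and your appeal to the simplicial structure of the building for (2)$\Leftrightarrow$(3) are in the right spirit, but you have located the difficulty in the wrong implication, and the step you call ``immediate'' is where the argument actually breaks. From $B\subseteq j(\JJ_\t\cap P_{\tSS-\Sigma})j\i\cap j'(\JJ_\t\cap P_{\tSS-\Sigma'})(j')\i$ and an element $g\in B$ you only obtain $j\i g j\in \JJ_\t\cap P_{\tSS-\Sigma}$ and $(j')\i g j'\in\JJ_\t\cap P_{\tSS-\Sigma'}$; taking $g=1$ shows this carries no information about the cosets, so no ``unwinding'' yields (1). The genuine input needed for (2)$\Rightarrow$(1) --- and the heart of the paper's proof --- is that the rational Iwahori subgroups of $\JJ_\t$ contained in $j(\JJ_\t\cap P_{\tSS-\Sigma})j\i$ are precisely the subgroups $jg(\JJ_\t\cap I)g\i j\i$ with $g\in\JJ_\t\cap P_{\tSS-\Sigma}$ (a parahoric acts transitively on the Iwahori subgroups it contains), and likewise for $\Sigma'$. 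Writing the common Iwahori in these two forms (with $j'=1$) gives $jg(\JJ_\t\cap I)g\i j\i=g'(\JJ_\t\cap I)(g')\i$, hence $(g')\i jg\in I$, which is exactly condition (1). You never invoke this conjugacy, so your cycle of implications has a real gap at (2)$\Rightarrow$(1).

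Conversely, the implication (1)$\Rightarrow$(2), which you present as the ``real work'', is essentially trivial and needs no classification input: if $x\in j(\JJ_\t\cap P_{\tSS-\Sigma})\cap j'(\JJ_\t\cap P_{\tSS-\Sigma'})$, then $j(\JJ_\t\cap P_{\tSS-\Sigma})j\i=x(\JJ_\t\cap P_{\tSS-\Sigma})x\i$ and similarly for $\Sigma'$, and since $I\subseteq P_{\tSS-\Sigma}\cap P_{\tSS-\Sigma'}$ the rational Iwahori $x(\JJ_\t\cap I)x\i$ lies in both conjugated parahorics. In particular the combinatorial claim you single out as the main obstacle, that $\Sigma\cup\Sigma'\subsetneq\tSS$ for all $\Sigma,\Sigma'\in\ci$, is both unnecessary and false in general: for instance in the cases $(\tilde C_2,\o^\vee_2,\tSS-\{1\})$ the set $\ci$ contains $\{1\}$ and $\{0,2\}$, whose union is all of $\tSS$. (When $\Sigma\cup\Sigma'=\tSS$ the intersection of the two standard parahorics is $\JJ_\t\cap I$ itself, which is still an Iwahori, so the conclusion survives --- but not via your ``proper $\t\s$-stable parahoric plus Lang's theorem'' justification.) The paper's proof is shorter and avoids both issues: it normalizes $j'=1$, proves (1)$\Leftrightarrow$(2) in one stroke via the conjugacy description of the Iwahori subgroups of the two parahorics, and obtains (2)$\Leftrightarrow$(3) directly from the definition of the simplicial structure of the Bruhat--Tits building.
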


\begin{proof}
We may and will assume throughout that $j'=1$.

The Iwahori subgroups of $j (\JJ_\t \cap P_{\tSS-\Sigma}) j\i$ are of the forms $j g (\JJ_\t \cap I) g \i j \i$ for some $g \in \JJ_\t \cap P_{\tSS-\Sigma}$ and the Iwahori subgroups of $\JJ_\t \cap P_{\tSS-\Sigma'}$ are of the forms $g' (\JJ_\t \cap I) (g') \i$ for some $g' \in \JJ_\t \cap P_{\tSS-\Sigma'}$. Hence (2) is equivalent to saying that for some $g \in \JJ_\t \cap P_{\Sigma^\flat}$ and $g' \in \JJ_\t \cap P_{\tSS-\Sigma'}$, $j g (\JJ_\t \cap I) g \i j \i=g' (\JJ_\t \cap I) (g') \i$, i.e., $(g') \i j g \in I$. The latter condition is equivalent to condition (1). 

By definition of the simplicial structure of the Bruhat-Tits building, (2) and (3) are equivalent .
\end{proof}

\subsection{Singularities of the closures of strata}

Consider $(G, \mu, J)$ as in Theorem~\ref{classification}.

\begin{prop}
Each fine Deligne-Lusztig variety $Y(w_\Sigma)$ occurring in the stratification~\eqref{strat} is smooth. Its closure $\tilde{Y}$ inside the flag variety $P_{\Sigma^\flat}/I$ is smooth. In particular, the projection $\cp_\emptyset \to \cp_J$ restricts to a resolution of singularities $\tilde{Y} \to \overline{Y(w_\Sigma)}$ of the closure of $Y(w_\Sigma)$ in $\cp_J$ (or equivalently in $\cn$).
\end{prop}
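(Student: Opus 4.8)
The plan is to reduce the whole statement to standard facts about a single classical Deligne--Lusztig variety in a finite-dimensional flag variety. By Corollary~\ref{4.1.3}, $\pi_J$ identifies $Y(w_\Sigma)$ with $X:=\{gI\in P_{\Sigma^\flat}/I;\ g\i\t\s(g)\in Iw_\Sigma I\}$, and $P_{\Sigma^\flat}/I$ is the flag variety $\bar P_{\Sigma^\flat}/\bar I$ of the reductive quotient $\bar P_{\Sigma^\flat}$ of $P_{\Sigma^\flat}$, with $\bar I$ the Borel image of $I$. Since $\t$ normalizes $I$ and $\mathfrak a$ is $\s$-stable, the twisted Frobenius $\Phi:=\operatorname{Ad}(\t)\circ\s$ preserves $\bar I$, so $X=X_\Phi(w_\Sigma)$ is a genuine classical Deligne--Lusztig variety, and $w_\Sigma$, being a $\s$-Coxeter element of $W_{\Sigma^\flat}$, is a product of pairwise distinct simple reflections. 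Smoothness of $X$ of pure dimension $\ell(w_\Sigma)$ (hence smoothness of $Y(w_\Sigma)$) is then the classical result of Deligne and Lusztig, and its Zariski closure in $P_{\Sigma^\flat}/I$ is $\tilde Y=\{gI\in P_{\Sigma^\flat}/I;\ g\i\t\s(g)\in\overline{Iw_\Sigma I}\}=\bigsqcup_{v\le w_\Sigma}X_\Phi(v)$.

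For the smoothness of $\tilde Y$ I would argue as follows. Write $\tilde Y=(\id,\Phi)\i(\overline{\mathcal O_{w_\Sigma}})$ inside $P_{\Sigma^\flat}/I\times P_{\Sigma^\flat}/I$, where $\mathcal O_{w_\Sigma}$ is the $\bar P_{\Sigma^\flat}$-orbit of relative position $w_\Sigma$ and $(\id,\Phi)$ is the graph embedding of $\Phi$ (a closed immersion). The orbit closure $\overline{\mathcal O_{w_\Sigma}}$ is a fibre bundle over either factor, with fibre the Schubert variety of $w_\Sigma$, respectively of $w_\Sigma\i$; since $w_\Sigma$ (hence $w_\Sigma\i$) is a product of distinct simple reflections, its Schubert variety is smooth, by the standard induction exhibiting $X(vs)$ as a $\mathbb{P}^1$-bundle over $X^{P_s}(v)\cong X(v)$ whenever $\ell(vs)=\ell(v)+1$ (this is valid in every type, so non-simply-laced cases cause no trouble). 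Hence $\overline{\mathcal O_{w_\Sigma}}$ is smooth and its second projection $p_2$ to $P_{\Sigma^\flat}/I$ is a smooth morphism. As $\Phi$ is a Frobenius, $d\Phi=0$, so the tangent spaces to the graph $\Gamma_\Phi$ are ``horizontal''; combined with the smoothness of $p_2$ this shows that $\Gamma_\Phi$ and $\overline{\mathcal O_{w_\Sigma}}$ meet transversally inside $P_{\Sigma^\flat}/I\times P_{\Sigma^\flat}/I$, so their intersection $\tilde Y$ is smooth, of dimension $\ell(w_\Sigma)$. (Alternatively, one may simply quote a known smoothness statement for the closure of a Deligne--Lusztig variety attached to a Coxeter element.)

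Finally, for the resolution statement: $\pi_J\colon G(L)/I\to G(L)/P_J$ is proper and $\tilde Y$ is closed in the finite-dimensional closed subvariety $P_{\Sigma^\flat}/I\subset G(L)/I$, so $\pi_J(\tilde Y)$ is closed; containing the dense subset $\pi_J(X)=Y(w_\Sigma)$, it must equal the closure $\overline{Y(w_\Sigma)}$ of $Y(w_\Sigma)$ in $\cp_J$ (which by the closure theorem above already lies inside $\cn$, so this closure is the same whether taken in $\cp_J$ or in $\cn$). By Corollary~\ref{4.1.3} again, $\pi_J$ carries the dense open $X\subset\tilde Y$ isomorphically onto a dense open subset of $\overline{Y(w_\Sigma)}$; since $X$ is irreducible (its support $\supp_\s(w_\Sigma)=\Sigma^\flat$ is not contained in any proper $\t\s$-stable subset), $\tilde Y$ and $\overline{Y(w_\Sigma)}$ are irreducible and $\pi_J|_{\tilde Y}$ is birational. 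Being in addition proper with smooth source, it is a resolution of singularities.

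The one nontrivial point is the smoothness of $\tilde Y$ in the middle step: it needs the smoothness of the Schubert variety of a product of distinct simple reflections, together with the transfer of that smoothness across the graph of the (twisted) Frobenius via transversality, for which the vanishing $d\Phi=0$ is essential. Everything else --- keeping track of $\Phi$ and of the distinction between $\Sigma^\flat=\supp_\s(w_\Sigma)$ and the honest support $\supp(w_\Sigma)$ --- is routine given the explicit computations assembled in Section~\ref{6}.
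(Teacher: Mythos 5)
Your proposal is correct and takes essentially the same route as the paper: the paper also reduces smoothness of $\tilde Y$ to smoothness of the Schubert variety in $P_{\Sigma^\flat}/I$ attached to $w_\Sigma$ (your graph-transversality step using $d\Phi=0$ is the standard justification of that reduction), and then uses that $w_\Sigma$ is a Coxeter element --- the paper quotes the isomorphism with the Bott--Samelson resolution where you run the $\mathbb{P}^1$-bundle induction. The remaining points (smoothness of $Y(w_\Sigma)$ itself and the proper birational resolution statement) are treated as immediate in the paper, exactly as in your write-up.
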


\begin{proof}
The smoothness of $\tilde{Y}$ is equivalent to the smoothness of the Schubert variety inside $P_{\Sigma^\flat}/I$ attached to $w_\Sigma$. Since the latter is a Coxeter element, this Schubert variety is isomorphic to its Bott-Samelson resolution, and hence is smooth.
\end{proof}

\begin{prop}
If the triple $(\tW, \l, J, \s)$ belongs to the following list

\medskip
\begin{tabular}{| c | c | c | }
\hline
$(\tilde A_n, \o^\vee_1, \SS, \id)$\quad (*) & $(\tilde A_n, \o^\vee_1, \SS, \s_0)$ \quad (*) & $(\tilde B_n, \o^\vee_1, \tSS-\{n\}, \id)$ \\
\hline
$(\tilde B_n, \o^\vee_1, \tSS-\{n\}, \t_1)$ & $(\tilde C_n, \o^\vee_1, \SS, \id)$ &  $(\tilde A_3, \o^\vee_2, \SS, \id)$ \quad(*) \\
\hline
$(\tilde A_3, \o^\vee_2, \SS, \s_0)$ \quad(*)  & $(\tilde C_2, \o^\vee_2, \SS, \id)$ \quad (*) & $(\tilde C_2, \o^\vee_2, \tSS-\{1\}, \id)$ (*) \\
\hline
$(\tilde C_2, \o^\vee_2, \tSS-\{1\}, \t_2)$  && \\
\hline
\end{tabular}

\medskip
then
\begin{enumerate}
\item
for all $\Sigma\in\ci$, the closure $\overline{Y(w_\Sigma)}$ of $Y(w_\Sigma)$ inside $\cn$ has at most isolated singularities, and
\item
the closure $\overline{Y(w_\Sigma)}$ is smooth if and only if $\t(J) \ne J$ or $\ell(w_\Sigma)\le 1$.
\end{enumerate}
The cases where all closures $\overline{Y(w_\Sigma)}$ are smooth are marked (*) in the table.
\end{prop}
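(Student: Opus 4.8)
The plan is to reduce the question, as in the preceding proposition, to a statement about Schubert varieties attached to Coxeter elements in finite Weyl groups, but now we must understand the closure $\overline{Y(w_\Sigma)}$ inside the \emph{partial} flag variety $P_{\tSS-\Sigma}/P_{\Sigma^\sharp}$ rather than inside $P_{\Sigma^\flat}/I$. Concretely, by Corollary~\ref{4.1.3} and the stratification~\eqref{strat}, $Y(w_\Sigma) \cong \{gP_{\Sigma^\sharp}\in P_{\tSS-\Sigma}/P_{\Sigma^\sharp};\ g\i\t\s(g)\in P_{\Sigma^\sharp}w_\Sigma P_{\s(\Sigma^\sharp)}\}$, and this is a classical fine Deligne-Lusztig variety in the finite partial flag variety $P_{\tSS-\Sigma}/P_{\Sigma^\sharp}$. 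Its closure is obtained by pushing down the closure $\overline{Iw_\Sigma I}$ in $P_{\Sigma^\flat}/I$ along the (proper) projection $\pi$ to $P_{\tSS-\Sigma}/P_{\Sigma^\sharp}$; since $\Sigma^\flat$ and $\Sigma^\sharp$ are disconnected in the affine Dynkin diagram (Prop.~5.1(2)/Cor.~\ref{4.1.3}), the relevant Schubert variety in $P_{\tSS-\Sigma}/P_{\Sigma^\sharp}$ is simply the image of the smooth Coxeter Schubert variety $\overline{Iw_\Sigma I}\subset P_{\Sigma^\flat}/I$ under the natural projection $P_{\Sigma^\flat}/I\to P_{\Sigma^\flat}/P_{\Sigma^\flat\cap J}$.

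Thus the first step is to identify, for each case in the table, the combinatorial data $(\Sigma^\flat,\Sigma^\sharp,w_\Sigma)$ from the explicit computations in Section~\ref{6}, and in particular to determine $\Sigma^\flat\cap J$ — the set of simple reflections we quotient out on the source side. Then I would invoke the classical analysis of images of Coxeter Schubert varieties under partial flag projections: the map $\overline{Iw_\Sigma I}\to \overline{Y(w_\Sigma)}$ is birational and proper (a resolution, by the preceding proposition), $\overline{Iw_\Sigma I}$ is smooth, and the fibers are products of (smooth) Schubert varieties in the Levi $P_{\Sigma^\flat\cap J}$; the only positive-dimensional fibers lie over the ``bottom'' of the picture. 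The key point is that the locus over which the resolution is not an isomorphism is exactly where $w_\Sigma$ has a reduced expression ending (after the relevant commutations) in a letter from $\Sigma^\flat\cap J$ — and because $w_\Sigma$ is a Coxeter element, each generator appears once, so this locus is a single point (the smallest stratum, corresponding to $w_{\Sigma'}=1$, i.e. $d(\Sigma')=0$). This yields part (1): at most an isolated singularity, located at that single point.

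For part (2) I would analyze exactly when that resolution is in fact an isomorphism. If $\t(J)\ne J$, then — one checks from the list in Section~\ref{6}, using that $\t$ acts nontrivially on the affine diagram — the relevant Coxeter element $w_\Sigma$ lies in $W_{\Sigma^\flat}$ with $\Sigma^\flat\cap J=\emptyset$ (the support avoids $J$ entirely, since $\t$ swaps the two ends), so $\pi$ is already an isomorphism on $\overline{Iw_\Sigma I}$ and $\overline{Y(w_\Sigma)}\cong\overline{Iw_\Sigma I}$ is smooth; likewise if $\ell(w_\Sigma)\le 1$ the Schubert variety is a point or $\mathbb P^1$ and its image is smooth. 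Conversely, when $\t(J)=J$ and $\ell(w_\Sigma)\ge 2$, one exhibits in each case an explicit local model of the singularity at the special point — a cone over a quadric, or a rational singularity of type computed from the Levi $P_{\Sigma^\flat\cap J}$ acting on the fiber — and checks it is genuinely singular; here the determination of the local structure can be done uniformly by observing that the exceptional fiber over the special point is $\overline{Iw'I}\subset P_{\Sigma^\flat\cap J}/I$ for the ``tail'' $w'$ of $w_\Sigma$ inside $J$, which is itself a Coxeter-element Schubert variety, hence smooth, and the total space is singular precisely because a positive-dimensional smooth fiber is contracted to a point in an otherwise birational map. Finally, I would simply read off from the table which cases satisfy ``$\t(J)\ne J$ always'' or ``$\ell(w_\Sigma)\le 1$ always'' — these are the (*) cases — completing the statement.

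The main obstacle I expect is the ``conversely'' direction of part (2): proving that the contraction genuinely introduces a singularity (rather than, e.g., a smooth blow-down, which can happen for $\mathbb P^1$-bundles) requires a local computation at the special point in each of the non-(*) cases — essentially identifying the exceptional fiber and checking that the normal geometry forces a singularity, typically by a dimension/tangent-space count showing the Zariski tangent space at that point is strictly larger than $\dim\overline{Y(w_\Sigma)}$. This is case-by-case but short, since in each case $\Sigma^\flat\cap J$ has at most one or two elements and the fiber is a point, $\mathbb P^1$, or a small Schubert surface, so the tangent space jump can be exhibited explicitly.
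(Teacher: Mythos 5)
Your reduction to the proper birational map from the closure of the Deligne--Lusztig variety in $P_{\Sigma^\flat}/I$ onto $\overline{Y(w_\Sigma)}$ matches the previous proposition, but the two steps you build on it contain genuine errors. First, in part (2) your treatment of the case $\t(J)\ne J$ rests on the claim that $\supp_\s(w_\Sigma)\cap J=\emptyset$, so that the projection is already an isomorphism. This is false: $J$ is a maximal proper subset of $\tSS$, so $\tSS-J$ is a single vertex and $\supp_\s(w_\Sigma)$ meets $J$ as soon as $\ell(w_\Sigma)\ge 2$; for instance in $(\tilde A_n,\o^\vee_1,\SS,\s_0)$ the elements $w_\Sigma$ have support containing several simple reflections of $J=\SS$, yet the closures are smooth. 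This example also kills your underlying criterion ``singular exactly where a positive-dimensional fiber is contracted'': a proper birational map from a smooth variety can contract positive-dimensional fibers onto smooth points (you mention blow-downs yourself, but only as a worry for the converse), and this actually happens in the starred cases with $\t(J)\ne J$. The mechanism in the paper is different: one compares $\overline{Y(w_\Sigma)}$ with the closure of the coarse Deligne--Lusztig variety $X_Q(w')$ in $P_{\supp_\s(w_\Sigma)}/Q$, where $Q=P_{\supp_\s(w_\Sigma)\cap J}$ and $w'=w_\Sigma\t\i$; when $\t(J)\ne J$ one has $w'\in W_{\mathrm F(\supp_\s(w_\Sigma)\cap J)}$, so $\overline{Y(w_\Sigma)}=X_Q(\id)$ is closed and smooth --- smoothness of the image, not injectivity of the resolution, is what saves these cases.

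Second, for part (1) your assertion that the positive-dimensional fibers of the resolution occur only over a single lowest point is not proved (the fibers are not products of Schubert varieties in the Levi but unions of Deligne--Lusztig-type loci, and the zero-dimensional boundary is in general a finite set of points, not one point), and in any case non-isomorphy of the resolution does not detect singularity of the image, so this route cannot yield (1) or (2) as stated. The missing key step, which is where the case-by-case input of the listed table actually enters, is the equality $\overline{Y(w_\Sigma)}=\overline{X_Q(w')}$; it holds if and only if $\ell(w_\Sigma)=\dim Qw'\mathrm F(Q)/\mathrm F(Q)$, equivalently iff $w_0w'$ is the longest element of $W_Kw'W_{\mathrm F(K)}$ with $K=\supp_\s(w_\Sigma)\cap J$, and this is checked using that $K$ is of type $A$ in the listed cases. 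Only with this equality does one know that the entire boundary except the finite set $X_Q(\id)$ lies in the smooth open coarse stratum, which gives (1); the smoothness or not at those finitely many points is then decided by normality for $\ell(w_\Sigma)\le 1$ and by the results of Brion and Polo, whereas your proposed tangent-space computations at the special points are left as an acknowledged obstacle and are not carried out.
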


\begin{proof}
We write $w:=w_\Sigma$ and assume that $\ell(w) > 0$. Denote by $\mathrm F$ the twisted Frobenius $g\mapsto \t \s(g)\t\i$; it acts on $\supp_\s(w)$ and hence on the flag variety $P_{\supp_\s(w)}/I$.
Let $Q:= P_{\supp_\s(w)\cap J}$ and $w':= w\t\i$, and denote by
\[
X_Q(w') := \{ gQ \in P_{\supp_\s(w)}/Q;\ g\i \mathrm F(g)\in Qw \mathrm F(Q)  \}
\]
the Deligne-Lusztig variety attached to $w'$ inside $P_{\supp_\s(w)}/Q$ with respect to $\mathrm F$. Recall that $w$ is $\s$-Coxeter which by definition means that $w'$ is a twisted Coxeter element for $\mathrm F$.

Using Cor.~\ref{4.1.3}, we identify $Y(w)$ with
\[
\{ gQ \in P_{\supp_\s(w)}/Q;\ g\i \mathrm F(g)\in Q\cdot_\s Iw'I  \}.
\]

Let $\overline{Y(w)}$, $\overline{X_Q(w')}$ denote the closures inside $P_{\supp_\s(w)}/Q$. Note that $\overline{Y(w)}$ is isomorphic to the closure of $Y(w)$ inside $\cn$. Clearly we have
\begin{equation}\label{YinX}
\overline{Y(w)} \subseteq \overline{X_Q(w')}.
\end{equation}

Furthermore $\overline{X_Q(w')}$ is irreducible by the well-known criterion for irreducibility of Deligne-Lusztig varieties and $\dim \overline{X_Q(w')} = \dim Qw'\mathrm F(Q)/\mathrm F(Q)$.

By Corollary~\ref{4.1.3}, we have $\dim Y(w) = \ell(w)$. Thus we see that the inclusion~\eqref{YinX} is an equality if and only if $\ell(w) = \dim Qw'\mathrm F(Q)/\mathrm F(Q)$. 

Let $w_0$ denote the longest element in $W_{\supp_\s(w)\cap J}$. Since $w\in {}^J\tW$, we have $\ell(w_0w') = \ell(w_0) + \ell(w') = \ell(w') + \dim Q/I$, and we conclude that the inclusion~\eqref{YinX} is an equality if and only if $w_0w'$ is the longest element inside $W_{\supp_\s(w)\cap J}w' W_{\mathrm F(\supp_\s(w)\cap J)}$. In the cases listed in the statement of the proposition, the Dynkin type of $\supp_\s(w)\cap J$ is type $A$, and it is easily checked that in those cases, $w_0w'$ indeed is the longest element inside this double coset.

Now suppose that $\t(J) \ne J$, or equivalently that $\mathrm F(Q)\ne Q$. Since $\tSS - J \subseteq \supp(w)$, we have $w'\in W_{\mathrm F(\supp_\s(w)\cap J)}$, so $X_Q(w') = X_Q(\id)$ is smooth and closed in $P_{\supp_\s(w)}/Q$.

On the other hand, if $\mathrm F(Q)=Q$, then the above implies that $\overline{Y(w)}=\overline{X_Q(w')} = X_Q(w') \sqcup X_Q(\id)$. Since $\dim X_Q(\id) = 0$, part (1) follows. Finally, since closures of Deligne-Lusztig varieties are always normal, $\overline{Y(w)}$ is smooth if $\ell(w)\le 1$. The remaining assertion in part (2) follows from Proposition 3.3 and Proposition 4.4 in \cite{brion-polo} by Brion and Polo (note that $\ell(w) < \dim P_{\supp_\s(w)}/Q$ whenever $\ell(w) > 1$, hence the boundary in the sense of loc.~cit.~is $X_Q(\id)$).
\end{proof}


\subsection{Examples}\label{sec:examples}
As examples, we discuss the three cases treated by Rapoport, Terstiege and Wilson in \cite{Rapoport-Terstiege-Wilson} and an example of non-Coxeter type.

\subsubsection{$(C$-$BC_m, \o^\vee_1, \tSS-\{m\})$} This case arises from $GU(1, 2m)$, $p$ ramified. Note here that the vertices $0$ and $m$ are conjugate under the extended affine Weyl group, but not conjugate under the diagram automorphism for the relative local Dynkin diagram. In this case there exist $m+1$ different EO strata in the basic locus, of dimensions $0$, \dots, $m$. In the terminology of \cite{Rapoport-Terstiege-Wilson}, the EO stratum of dimension $i$ is the union of all strata attached to vertex lattices of type $2i$. The stratification is indexed by the vertices of the Bruhat-Tits building of $\JJ_\t$.

\subsubsection{$({}^2 B$-$C_m, \o^\vee_1, \tSS-\{m\})$} This case arises from $GU(1, 2m-1)$, $p$ ramified, the hermitian form $C$ is split. In this case there exist $m+2$ EO strata in the basic locus; one each of dimension $0$, \dots, $m-1$, and two of dimension $m$. For $i=0,\dots, m-1$, the EO stratum of dimension $i$ is the union of all strata attached to vertex lattices of type $2i$. On the other hand, the union of all strata attached to vertex lattices of type $2m$ is equal to the union of the two $m$-dimensional EO strata (the corresponding EO strata are related by the conjugation action of $GU(1, 2m-1)$).  In this case, the action of $\s\t$ on the affine Dynkin diagram is trivial. There are two orbits with the same distance to the vertex $m$ (namely the vertices $0$ and $1$). See Example~\ref{example_sigmaflat}. The union of these two orbits occurs as an element $\Sigma\in\ci$; it corresponds to the $m-1$-dimensional EO stratum. The index set for the Deligne-Lusztig varieties inside this EO stratum is the set of edges of type $\{0,1\}$ inside the Bruhat-Tits building of $\JJ_\t$. In the description in \cite{Rapoport-Terstiege-Wilson} this is reflected by Prop.~3.4.

\subsubsection{$(B$-$C_m, \o^\vee_1, \tSS-\{m\})$} This case arises from $GU(1, 2m-1)$, $p$ ramified, the hermitian form $C$ being nonsplit. In this case basic EO strata correspond bijectively to types of vertex lattices, and the index set of the stratification is the set of vertices of the Bruhat-Tits building of $\JJ_\t$.

\subsubsection{$(C$-$BC_2, \o^\vee_1, \tSS-\{1\})$} This case arises from $GU(1, 3)$, $p$ ramified. However, the level structure is different from the one considered in \cite{Rapoport-Terstiege-Wilson}. One can show that the basis locus is still the union of EO strata. There are 6 EO strata in the basic locus, which correspond to the elements $1$, $s_1$, $s_1 s_0$, $s_1 s_2$, $s_1 s_0 s_1$ and $s_1 s_2 s_1$ respectively. One is of dimension $0$, one is of dimension $1$, two of dimension $2$ and two of dimension $3$. The elements $s_1 s_0 s_1$ and $s_1 s_2 s_1$ are not Coxeter elements.

\end{document}